\newcommand{\papertitleLC}{{Ranks of quotients, remainders and
      $p$-adic digits of matrices}}
\newcommand{\paperauthors}{{Mustafa Elsheikh, Andy Novocin, Mark Giesbrecht}}
\newtheorem{theorem}{Theorem}
\newtheorem{lemma}[theorem]{Lemma}
\newtheorem{corollary}[theorem]{Corollary}
\newtheorem{conjecture}[theorem]{Conjecture}
\newtheorem{fact}[theorem]{Fact}
\DeclareMathOperator{\rem}{rem}
\DeclareMathOperator{\quo}{quo}
\newcommand{\mm}{{\mathfrak{m}}}
\newcommand{\ZZ}{{\mathbb{Z}}}
\newcommand{\nxn}{{n\times n}}
\DeclareMathOperator{\rank}{rank}
\DeclareMathOperator{\diag}{diag}
\begin{document}


\title{\papertitleLC}

\author{Mustafa Elsheikh\thanks{Cheriton School of Computer Science, University
    of Waterloo, Waterloo, Ontario, Canada (melsheik@uwaterloo.ca,
    andy@novocin.com, mwg@uwaterloo.ca). Supported by the Natural Sciences and
    Engineering Research Council (NSERC) of Canada.} \and Andy
  Novocin\footnotemark[1] \and Mark Giesbrecht\footnotemark[1]}


\pagestyle{myheadings}
\markboth{M. Elsheikh, A. Novocin, and M. Giesbrecht}{\papertitleLC}

\date{}

\maketitle


\begin{abstract}
  For a prime $p$ and a matrix $A \in \ZZ^{n \times n}$, write $A$ as $A = p (A
  \quo p) + (A \rem p)$ where the remainder and quotient operations are applied
  element-wise. Write the $p$-adic expansion of $A$ as $A = A^{[0]} + p A^{[1]}
  + p^2 A^{[2]} + \cdots$ where each $A^{[i]} \in \ZZ^{n \times n}$ has entries
  between $[0, p-1]$.  Upper bounds are proven for the $\ZZ$-ranks of $A \rem
  p$, and $A \quo p$. Also, upper bounds are proven for the $\ZZ/p\ZZ$-rank of
  $A^{[i]}$ for all $i \ge 0$ when $p = 2$, and a conjecture is presented for
  odd primes.
\end{abstract}


Keywords:  Matrix rank, Integer matrix, Remainder and quotient, $p$-Adic expansion.


AMS classification:
  15A03, 
  15B33, 15B36, 
  11C20. 

\section*{Outline}

This paper presents two related results on integer matrices after
applying
 element-wise 
division with remainder.
First, let $A$ be an $n \times n$
integer matrix with rank $r$ over $\ZZ$ and rank $r_0$ over $\ZZ/p\ZZ$. If $n >
p^{r_0}$ then Theorem~\ref{thm:rank-quo-p} in Section~\ref{sec:quop} shows that
$\rank(A \rem p) \le (p^{r_0} - 1)(p + 1)/(2(p-1))$ and $\rank(A \quo p) \le r +
(p^{r_0} - 1)(p + 1)/(2(p-1))$.

The second result is concerned with the $\ZZ/p\ZZ$-ranks of $p$-adic digits of an
integer matrix. Let $U, S, V \in \ZZ^{\nxn}$ such that $U, V$ have entries from
$\{0, 1\}$, $\det U \det V \not\equiv 0 \pmod{2}$, $S = \diag(1, \ldots, 1, 0,
\ldots, 0)$, $r$ be the rank of $S$ over $\ZZ/2\ZZ$, and $n \ge 2^r$. If $M =
USV \in \ZZ^{n\times n}$, then Theorem~\ref{thm:padic-general} in
Section~\ref{sec:padic} shows that rank of $M^{[i]}$ over $\ZZ/2\ZZ$ is ${r
  \choose 2^i}$ for all $i \ge 1$.  A conjecture is presented in
Section~\ref{sec:conjecture} for the same setup, but for $p$ an odd prime.

A result on integer rank of Latin squares is also obtained.  Let $A$ be the
integer matrix of rank one formed by the outer product between the vector $(1, 2,
\ldots, p-1)$ and its transpose. Then $A \rem p$ is a Latin square on the
symbols $\{1, \ldots, p-1 \}$. It is shown in Corollary~\ref{cor:rank-latin} in
Section~\ref{sec:rank-latin} that the integer rank of this Latin square is $(p +
1)/2$.


\section{Quotient and Remainder Matrices}
\label{sec:quop}

For any integer $n$ and any prime $p$, let $n \rem p$ and $n \quo p$ denote the
(non-negative) remainder and quotient in the Euclidean division $n = qp + r$
where $0 \le r < p$. The operators $\rem p$ and $\quo p$ are naturally extended
to vectors and matrices using element-wise application.

Throughout, we utilize the notion of Smith normal form of an integer matrix.
For any matrix $A \in \ZZ^{n \times n}$ of rank $r$, there exist unimodular
matrices $U, V \in \ZZ^{n \times n}$ and a unique $n \times n$ integer matrix $S
= \diag(s_1, s_2, \ldots, s_n)$ such that $A = USV$. Furthermore, $s_i \mid
s_{i+1}$ for all $1 \le i \le n$ and $s_i = 0$ for all $r < i \le n$. $S$ is
called the Smith normal form of $A$.  For a discussion on existence and
uniqueness of Smith normal form, we refer to the reader to the textbook by
Newman~\cite{Newman:1972}. We use two notions of ranks. The integer rank of $A
\in \ZZ^{n \times n}$ is denoted by $\rank(A)$. The rank of the image of $A$ in
the finite field $\ZZ/p\ZZ$
is denoted by $\rank_{p}(A)$.  Alternatively, if $r = \rank(A)$ and the Smith
form of $A$ is $S = \diag(s_1, \ldots, s_r, 0, \ldots, 0)$, then $\rank_{p}(A) =
r_0$ is the maximal index $i$ such that $p \mid s_i$.

Finally, we use the notation $A_{*, j}$ for the $j$th column of $A \in \ZZ^{n
  \times n}$ and $a_{i, j}$ for the entry $(i, j)$ of $A$.
 
\subsection{Rank Theorem}

The following theorem is the main result of Section \ref{sec:quop}.

\begin{theorem}\label{thm:rank-quo-p}
  Let $A$ be an $n \times n$ matrix over $\ZZ$, $r = \rank(A)$, $r_0 =
  \rank_{p}(A)$, and assume $n > p^{r_0}$. Then
  \begin{enumerate}
  \item[(i)] $ \rank(A \rem p) \le (p^{r_0} - 1)(p + 1)/(2(p-1)) $.
  \item[(ii)] $ \rank(A \quo p) \le r + (p^{r_0} - 1)(p + 1)/(2(p-1)) $.
  \end{enumerate}
\end{theorem}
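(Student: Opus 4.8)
The plan is to reduce part (ii) to part (i) and then prove (i) by a direct dimension count. For (ii): from $A = p(A \quo p) + (A \rem p)$ we get $A \quo p = (A - (A \rem p))/p$, so over $\QQ$,
\[
  \rank(A \quo p) = \rank(A - (A \rem p)) \le \rank(A) + \rank(A \rem p) = r + \rank(A \rem p),
\]
and (ii) follows from (i) by subadditivity of rank. So the whole task is to bound $\rank(A \rem p)$.

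For (i), let $\psi \colon \ZZ/p\ZZ \to \{0, 1, \ldots, p-1\}$ be the canonical lift, applied entrywise to vectors. Each column of $A \rem p$ equals $\psi$ applied to the corresponding column of $A$ reduced modulo $p$, and hence lies in $\{\psi(w) : w \in W\}$, where $W \le (\ZZ/p\ZZ)^n$ is the column space of $A$ over $\ZZ/p\ZZ$; since $\dim_{\ZZ/p\ZZ} W = r_0$, we have $\abs{W} = p^{r_0}$. Thus the column space of $A \rem p$ over $\QQ$ is contained in $\op{span}_\QQ\{\psi(w) : w \in W\}$, and it suffices to bound the dimension of this span by $(p^{r_0} - 1)(p + 1)/(2(p-1))$.

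The key tool is the pairing identity: for every $w \in W$,
\[
  \psi(w) + \psi(-w) = p\, s_w,
\]
where $s_w \in \{0, 1\}^n$ is the indicator vector of the support $\{i : w_i \neq 0\}$; indeed, entrywise, $\psi(a) + \psi(-a)$ equals $p$ if $a \neq 0$ in $\ZZ/p\ZZ$ and $0$ if $a = 0$. Crucially, $s_{cw} = s_w$ for every nonzero scalar $c$, so $s_w$ depends only on the one-dimensional subspace spanned by $w$. Partition $W \setminus \{0\}$ into the $N \defeq (p^{r_0} - 1)/(p - 1)$ punctured lines obtained by removing the origin from each one-dimensional subspace of $W$, and fix one such punctured line $\ell$. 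When $p$ is odd, choose a representative from each of the $(p-1)/2$ pairs $\{w, -w\} \subseteq \ell$ to get a set $\ell^{+}$ of size $(p-1)/2$; the identity gives $\psi(-w) = p\, s_w - \psi(w)$, so $\op{span}\{\psi(w) : w \in \ell\}$ lies in the span of the $(p+1)/2$ vectors $\{s_\ell\} \cup \{\psi(w) : w \in \ell^{+}\}$ (and for $p = 2$ a punctured line is a single vector, contributing at most $1 \le (p+1)/2$). Summing over the $N$ punctured lines, $\op{span}_\QQ\{\psi(w) : w \in W\}$ is spanned by at most $N \cdot (p+1)/2 = (p^{r_0} - 1)(p+1)/(2(p-1))$ vectors, which is (i).

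The step I expect to be the crux is this last count. A careless use of the pairing identity gives only $\rank(A \rem p) \le p^{r_0} - 1$ — two generators, $\psi(w)$ and $s_w$, for each of the $(p^{r_0}-1)/2$ pairs. The sharpening to the claimed bound rests on the observation that all $(p-1)/2$ pairs on a common line share the \emph{same} support-indicator vector $s_\ell$, so an entire punctured line (which carries $p - 1$ vectors $\psi(w)$) needs only $(p+1)/2$ generators rather than $p - 1$. I also expect the hypothesis $n > p^{r_0}$ not to enter this argument directly; it guarantees that $A \rem p$ has fewer than $n$ distinct columns, and the authors may well organize the proof around the submatrix of those distinct columns, in which case $n > p^{r_0}$ is what makes that reduction substantive.
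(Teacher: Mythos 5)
Your proof is correct, and it reaches the bound by the same underlying mechanism as the paper: partition the nonzero vectors of an $r_0$-dimensional $\ZZ/p\ZZ$-space into its $(p^{r_0}-1)/(p-1)$ punctured lines, and show each line costs at most $(p+1)/2$ because $\psi(w)+\psi(-w)$ equals $p$ times the common support indicator of the line --- in the paper this is exactly the identity $ij \rem p + i(p-j) \rem p = p$ behind Lemma~\ref{lem:rank-R-upper}, and your count of lines appears there as $\sum_{i=1}^{r_0}\binom{r_0}{i}(p-1)^{i-1}$. The packaging, however, is genuinely different, and yours is leaner. The paper routes through the Smith normal form $A=USV$, normalizes the leading coefficient of each column, and outsources the per-class bound to Lemma~\ref{lem:rank-1-rem}, which asserts an \emph{exact} rank of $(p+1)/2$ for the multiplication-table matrix $uu^{T}\rem p$ --- hence the inductive lower-bound lemmas, which the theorem never actually needs --- and whose hypotheses are what drag in $n\ge p$ and the informal ``the worst case is $\widehat{u}\rem p = (0,1,\ldots,p-1,\ldots)$'' step. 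By working directly with the mod-$p$ column space $W$ and exhibiting the explicit spanning set $\{s_\ell\}\cup\{\psi(w): w\in\ell^{+}\}$ for each line, you avoid Smith form entirely, avoid the lower-bound machinery, treat $p=2$ uniformly, and, as you correctly observe, never use the hypothesis $n>p^{r_0}$. Part (ii) is handled identically in both proofs.
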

\begin{proof}
  We will prove part (i) in Lemma \ref{lem:rank-A-rem-p}. For part (ii), we have $A =
  (A \rem p) + p (A \quo p)$, or $p(A \quo p) = A - (A \rem p)$. For matrices $X
  = Y + Z$, $\rank$ is sub-additive and $\rank(X) \le \rank(Y) +
  \rank(Z)$. Scaling a matrix by $p$ or $-1$ does not change its rank. So
  $\rank(A \quo p) \le \rank(A) + \rank(A \rem p) = r + \rank(A \rem p)$.
\end{proof}

\begin{lemma}\label{lem:rank-A-rem-p}
  $\rank(A \rem p) \le (p^{r_0} - 1)(p + 1)/(2(p-1))$.
\end{lemma}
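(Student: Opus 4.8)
The plan is to reduce the problem to a statement about subspaces of $(\ZZ/p\ZZ)^n$ and then exploit the multiplicative (line) structure of such a subspace. Write $\bar A = A \bmod p$ and let $\phi\colon \ZZ/p\ZZ \to \{0,\ldots,p-1\}$ be the section sending a residue class to its representative in $[0,p-1]$, extended entrywise to vectors; then the columns of $A \rem p$ are exactly the vectors $\phi(\bar A_{*,j})$. Since $\rank_{p}(A) = r_0$, the columns of $\bar A$ all lie in an $r_0$-dimensional subspace $W \subseteq (\ZZ/p\ZZ)^n$, so the $\QQ$-column space of $A\rem p$ is contained in $\langle\, \phi(w) : w \in W \,\rangle_\QQ$. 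Hence it suffices to prove that for every $r_0$-dimensional $W \le (\ZZ/p\ZZ)^n$ one has $\dim_\QQ \langle\, \phi(w) : w\in W \,\rangle \le (p^{r_0}-1)(p+1)/(2(p-1))$. (This inequality in fact holds without the hypothesis $n > p^{r_0}$.)

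The key reduction is to one-dimensional subspaces. Every nonzero element of $W$ lies in exactly one line (one-dimensional subspace) $L \le W$, and $W$ has exactly $(p^{r_0}-1)/(p-1)$ lines; since $\phi(0)=0$, this gives
\[
\langle\, \phi(w) : w\in W \,\rangle_\QQ \;=\; \sum_{L \le W,\ \dim L = 1} \langle\, \phi(w) : w \in L \,\rangle_\QQ ,
\]
so by subadditivity of dimension over a sum of subspaces it is enough to show $\dim_\QQ\langle\, \phi(w):w\in L \,\rangle \le (p+1)/2$ for a single line $L$, after which the claimed bound is just $\tfrac{p^{r_0}-1}{p-1}\cdot\tfrac{p+1}{2}$.

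For the one-dimensional case, fix $v \ne 0$, let $L = \{cv : c \in \ZZ/p\ZZ\}$, and let $\chi \in \{0,1\}^n$ be the indicator vector of the support $\{i : v_i \ne 0\}$. The crucial identity is that for each $c \ne 0$,
\[
\phi(cv) + \phi\bigl((p-c)v\bigr) = p\,\chi ,
\]
checked entrywise: a coordinate $i$ with $v_i=0$ contributes $0$ on both sides, while a coordinate with $v_i\ne 0$ gives $\phi(cv)_i + \phi(-cv)_i = p$. For $p$ odd the $p-1$ nonzero scalars split into $(p-1)/2$ pairs $\{c,\,p-c\}$, and the identity shows $\langle\, \phi(cv) : c\ne 0 \,\rangle = \langle\, \phi(v), \phi(2v), \ldots, \phi(\tfrac{p-1}{2}v),\ \chi \,\rangle$, which is spanned by $(p+1)/2$ vectors; for $p=2$ a line contributes at most one vector and $1 \le (p+1)/2$. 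This establishes the base case and hence the lemma. I expect the main thing to get right is precisely this entrywise identity together with its bookkeeping in the presence of zero coordinates of $v$; everything else is counting lines and subadditivity of dimension. It is also worth remarking that the bound is already sharp for $r_0 = 1$, since the Latin square of Corollary~\ref{cor:rank-latin} realizes $\rank(A\rem p) = (p+1)/2$.
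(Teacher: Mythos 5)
Your proof is correct, and it shares the paper's overall skeleton: partition the nonzero columns of $A \bmod p$ into scalar classes (lines in $(\ZZ/p\ZZ)^n$), bound each line's contribution to the integer rank by $(p+1)/2$, and count $(p^{r_0}-1)/(p-1)$ lines --- the paper reaches that count as the sum $\sum_{i=1}^{r_0}\binom{r_0}{i}(p-1)^{i-1}$ over supports and normalized coefficient tuples, which is exactly your line count in disguise. Where you genuinely depart is the one-dimensional step. The paper delegates it to Lemma~\ref{lem:rank-1-rem}, which is stated only for odd $p$, for $n \ge p$, and for vectors whose residues exhaust $\{1,\ldots,p-1\}$; to invoke it the paper argues that the ``maximal rank'' case is the one where all residues appear, and this is where the hypothesis $n > p^{r_0}$ gets consumed. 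Your entrywise identity $\phi(cv) + \phi((p-c)v) = p\chi$ proves the $(p+1)/2$ bound directly for an arbitrary line, with no condition on which residues occur and no condition on $n$. It is the same pairing $c \leftrightarrow p-c$ that drives the paper's Lemma~\ref{lem:rank-R-upper} (there written $R_{*,i} = (p,\ldots,p)^T - R_{*,p-i}$), but lifted from the special matrix $uu^T \rem p$ to a general line, with the indicator vector $\chi$ of the support correctly replacing the all-$p$ vector when $v$ has zero coordinates. The payoff is a cleaner and strictly more general statement (the bound holds without $n > p^{r_0}$, and $p=2$ is covered), and your closing sharpness remark for $r_0 = 1$ is consistent with Corollary~\ref{cor:rank-latin}.
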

\begin{proof}
  Let $A = USV$ be the Smith normal form of $A$, with $S = S_r + p S_q$ where
  $S_q = S \quo p$ and $S_r = S \rem p$. Then
  \begin{equation}
    A \rem p = U S V \rem p = (U S_r V + p U S_q V) \rem p = U S_r V \rem p.
  \end{equation}  If $r_0 = \rank_p(A)$ then $S_r =
  \diag(\sigma_1, \ldots, \sigma_{r_0}, 0, \ldots, 0)$ where $\sigma_i \in [1,
  p-1]$ for all $1 \le i \le r_0$. The $j$th column of $A \rem p$ is
  \begin{equation}\label{eq:cols-of-A-rem-p}
    A_{*,j} \rem p
    = \left( \sum_{\ell = 1}^{r_0} \sigma_\ell v_{\ell, j} U_{*,\ell} \right) \rem p
    = \left( \sum_{\ell = 1}^{r_0} c_{\ell, j} U_{*, \ell} \right) \rem p,
  \end{equation}
  where $c_{\ell, j} \in [0, p-1]$. If we only consider the non-zero
  coefficients $c_{\ell, j}$, then the right-hand side of \eqref{eq:cols-of-A-rem-p} is an
  $i$-term sum $(c_{\ell_1, j} U_{*, \ell_1} + \ldots + c_{\ell_i, j} U_{*,
    \ell_i}) \rem p$, where $1 \le i \le r_0$ and $1 \le \ell_1 < \ell_2 <
  \ldots < \ell_i \le r_0$.  The coefficients $c_{\ell_k, j}$ are elements in
  $[1, p-1]$ which are units modulo $p$. In particular, we can factor
  $c_{\ell_1, j}$ from the sum, and re-write \eqref{eq:cols-of-A-rem-p} as:
  \begin{equation}\label{eq:cols-of-A-rem-p-2}
    A_{*, j} \rem p = (c_{\ell_1, j} (U_{*, \ell_1} + \alpha_{\ell_2, j}
    U_{\ell_2, j} + \ldots + \alpha_{\ell_i, j} U_{*, \ell_i})) \rem p,
  \end{equation}
  where $\alpha_{\ell_k, j} \in [1, p-1]$ for all $k$.

  Fix some $i, j$ and some non-zero assignment of $\alpha_{\ell_2, j}, \ldots,
  \alpha_{\ell_i, j}$ in \eqref{eq:cols-of-A-rem-p-2} and let $\widehat{u} =
  U_{*, \ell_1} + \alpha_{\ell_2, j} U_{\ell_2, j} \ldots + \alpha_{\ell_i, j}
  U_{*, \ell_i}$. Then \eqref{eq:cols-of-A-rem-p-2} becomes $A_{*, j} \rem p =
  (c_{\ell_1, j}\widehat{u}) \rem p$. There are $p - 1$ possible values for
  $c_{\ell_1, j}$ and hence the possible values of $A_{*, j} \rem p$ are:
  \begin{equation}\label{eq:sets-of-u-sums}
    \{ \widehat{u} \rem p, (2\widehat{u}) \rem p,
    ((p-1)\widehat{u}) \rem p \}.
  \end{equation}
  We are interested in getting an upper bound on the rank of this set of
  vectors. First note that $(x y) \rem p = (x \rem p)(y \rem p) \rem p$. So $(i
  \widehat{u}) \rem p = (i (\widehat{u} \rem p)) \rem p$ for $i \in [1, p-1]$.
  Hence the maximal rank one can achieve from \eqref{eq:sets-of-u-sums} occurs
  when (up to permutation) $\widehat{u} \rem p = (0, 1, 2, \ldots, p-1,
  \ldots)$. The rest of the entries are duplicates from the same range $[0,
  p-1]$ by the pigeonhole principle. Now apply Lemma~\ref{lem:rank-1-rem}
  to conclude that the vectors in
  \eqref{eq:sets-of-u-sums} have rank at most $(p+1) / 2$.

  Thus for each $i, j$ and non-zero assignment of $\alpha_{\ell_2, j}, \ldots,
  \alpha_{\ell_i, j}$, there are at most $(p+1)/2$ linearly independent columns
  of $A \rem p$. We now count the maximal possible number of distinct $A_{*,
    j}$'s.  There are $\binom{r_0}{i}$ possible ways to select $i$ different
  columns from the first $r_0$ columns of $U$. For each choice, there are $i-1$
  coefficients: $\alpha_{\ell_2, j}, \ldots, \alpha_{\ell_i, j}$, and
  $(p-1)^{i-1}$ possible ways to assign their non-zero values from $[1,
  p-1]$. Each choice gives a set of vectors as in \eqref{eq:sets-of-u-sums}
  whose rank is at most $(p + 1)/2$.  Summing over all $i \in [1, r_0]$, the
  maximal possible rank from the span of columns in~\eqref{eq:cols-of-A-rem-p}
  is 
  \begin{equation} 
    \sum_{i = 1}^{r_0} \binom{r_0}{i} (p-i)^{i-1}
    \frac{p+1}{2} = \frac{p^{r_0} - 1}{p-1}\frac{p+1}{2},
  \end{equation}
  using the binomial theorem.
\end{proof}

\subsection{Remainder of Rank-1 Matrices}

In this section we prove the following auxiliary result.

\begin{lemma}\label{lem:rank-1-rem}
  Let $p$ be any odd prime, $n \ge p$. Let $u \in \ZZ^{n}$ be any non-zero
  vector where the entries of $u \rem p$ include $\{ 1, 2, \ldots, p-1 \}$.
  Then the set of vectors $\{ u \rem p, (2u) \rem p, \ldots, ((p-1)u) \rem p \}$
  is linearly dependent and has rank $(p+1)/2$.
\end{lemma}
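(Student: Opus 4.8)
The plan is to prove the two halves of the statement by different means: an elementary pairing argument for the upper bound $\rank \le (p+1)/2$, and the arithmetic of the multiplication table of $\ZZ/p\ZZ$ for the matching lower bound. Write $v_k = (ku)\rem p$ for $k = 1,\dots,p-1$, so $(v_k)_i = k u_i \bmod p$. Since $(p-k)u_i \equiv -k u_i \pmod p$, the coordinate $(v_k)_i + (v_{p-k})_i$ equals $p$ when $p\nmid u_i$ and $0$ when $p\mid u_i$ — in particular it does not depend on $k$. Hence $v_k + v_{p-k} = w$ for every $k$, where $w$ is the fixed vector with a $p$ in each coordinate $i$ with $p\nmid u_i$ and a $0$ elsewhere. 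Therefore $v_{p-k} = w - v_k$, so $\op{span}\{v_1,\dots,v_{p-1}\} = \op{span}\{v_1,\dots,v_{(p-1)/2},\,w\}$, which has dimension at most $(p-1)/2 + 1 = (p+1)/2$; for $p\ge 5$ this already forces the $p-1$ vectors to be linearly dependent.

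For the lower bound I would first reduce to a concrete matrix. By hypothesis there are distinct coordinates $j_1,\dots,j_{p-1}$ with $u_{j_m}\equiv m\pmod p$, and restricting each $v_k$ to these coordinates produces the rows of the integer matrix $B\in\ZZ^{(p-1)\times(p-1)}$ with $B_{k,m}=km\bmod p$, i.e.\ the multiplication table of $G := (\ZZ/p\ZZ)^{\times}$ with representatives taken in $[1,p-1]$. Restriction cannot increase rank, so $\rank\{v_1,\dots,v_{p-1}\}\ge \rank B$; combined with the upper bound it suffices to show $\rank B \ge (p+1)/2$, after which everything collapses to equality (this also identifies the Latin-square rank of Corollary~\ref{cor:rank-latin}). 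Since $B_{k,m}$ depends only on the product $km$ in $G$, the rank-preserving column permutation $m\mapsto m^{-1}$ turns $B$ into the group matrix $\bigl[\,f(km^{-1})\,\bigr]_{k,m\in G}$ of the cyclic group $G$, where $f\colon G\to\ZZ$ sends a residue class to its representative in $[1,p-1]$. The group matrix of a finite abelian group is diagonalized by its characters — over $\CC$, hence over $\QQ$ and $\ZZ$ as well since $B$ is integral — with eigenvalues the Fourier coefficients $\widehat f(\chi)=\sum_{a\in G}f(a)\chi(a)=\sum_{a=1}^{p-1}a\,\chi(a)$, so $\rank B$ equals the number of characters $\chi$ of $G$ with $\widehat f(\chi)\ne 0$. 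For the trivial character $\widehat f(\chi)=p(p-1)/2\ne 0$, and for each of the $(p-1)/2$ odd characters (those with $\chi(-1)=-1$) I claim $\widehat f(\chi)\ne 0$. That produces $1 + (p-1)/2 = (p+1)/2$ characters with $\widehat f(\chi)\ne 0$, giving $\rank B\ge (p+1)/2$. (Substituting $a\mapsto p-a$ and using $\sum_{a}\chi(a)=0$ shows $\widehat f(\chi)=0$ for every nontrivial \emph{even} $\chi$, re-deriving the upper bound on $\rank B$ — but we already have that.)

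The one genuinely nontrivial ingredient, and the step I expect to be the real obstacle, is the nonvanishing $\sum_{a=1}^{p-1} a\,\chi(a)\ne 0$ for odd $\chi$. Up to the nonzero factor $-1/p$ this sum is $L(0,\chi)$ (equivalently $p$ times the generalized Bernoulli number $B_{1,\chi}$), and $L(0,\chi)\ne 0$ for odd $\chi$ is classical: it follows from Dirichlet's $L(1,\bar\chi)\ne 0$ together with the functional equation relating $L(0,\chi)$ to $L(1,\bar\chi)$ by a nonzero factor; alternatively, $\prod_{\chi\ \mathrm{odd}}\bigl(-\tfrac12 B_{1,\chi}\bigr)$ is, up to an explicit nonzero rational, the relative class number $h^-$ of $\QQ(\zeta_p)$, a positive integer, so no factor can vanish. (Washington's \emph{Introduction to Cyclotomic Fields} supplies all of this.) By contrast the matrix-theoretic steps — the pairing, the coordinate restriction, and the diagonalization of the group matrix — are routine; it is only the passage from $\rank\le (p+1)/2$, which is all that Theorem~\ref{thm:rank-quo-p} actually needs, to the exact value $(p+1)/2$ that rests on this arithmetic fact.
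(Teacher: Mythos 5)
Your proof is correct, and the upper-bound half is essentially the paper's own argument: the pairing $v_k + v_{p-k} = w$ is exactly the column relation $R_{*,i} = (p,\ldots,p)^T - R_{*,p-i}$ of Lemma~\ref{lem:rank-R-upper}, transported to a general $u$, and your restriction to coordinates realizing each nonzero residue is the same reduction to the multiplication table $R$ that the paper carries out (in the opposite direction) via the pigeonhole/duplicate-row argument in its proof of Lemma~\ref{lem:rank-1-rem}. Where you genuinely diverge is the lower bound. The paper stays elementary: it adjoins the all-$p$ column to form $B$, and shows by induction (Lemmas~\ref{lem:extra-induction} and~\ref{lem:inductive}) that the first $(p-1)/2$ columns are independent, using explicit row and column operations on the entries $3j \rem p$. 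You instead note that after the column relabelling $m \mapsto m^{-1}$ the matrix $B_{k,m} = km \rem p$ becomes the group matrix of $(\ZZ/p\ZZ)^{\times}$ for the function sending a class to its representative in $[1,p-1]$, diagonalize it by characters, and reduce the exact rank to the nonvanishing of $\sum_{a=1}^{p-1} a\chi(a)$ for odd $\chi$ --- equivalently $B_{1,\chi} \neq 0$, which you correctly source to $L(1,\bar\chi)\neq 0$ plus the functional equation, or to the positivity of the relative class number $h^-$ of $\QQ(\zeta_p)$. This is a valid proof, and it buys more than the paper's: it exhibits the full spectrum of the Latin square $R$ (eigenvalue $p(p-1)/2$ for the trivial character, $pB_{1,\chi}$ for odd $\chi$, and $0$ for nontrivial even $\chi$), rederives the upper bound for free, and explains structurally why the answer is $1 + (p-1)/2$. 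The cost is proportionality: a fact the paper settles with row reduction now rests on Dirichlet's nonvanishing theorem, which is much heavier machinery than the statement requires, and (as you note) the one step that is not routine linear algebra is precisely this arithmetic input.
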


First we prove this result for $n = p-1$. A generalization follows. Let $u = (1,
2, \ldots, p-1) \in \ZZ^{(p-1)}$ and $M \in \mathbb{Z}^{(p-1)\times (p-1)}$ be
the rank-$1$ matrix $M = u u^{T}$ and $R = M \rem p$.

\begin{lemma}\label{lem:rank-R}
  $\rank(R) = (p+1)/2$.
\end{lemma}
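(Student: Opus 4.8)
The plan is to analyze the matrix $R = M \rem p$ directly, where $M = uu^T$ with $u = (1,2,\ldots,p-1)$. The entry $R_{ij}$ is $ij \rem p$, so $R$ is precisely the multiplication table of the group of units $(\ZZ/p\ZZ)^\times$, and my strategy is to diagonalize $R$ (as a rational matrix, or equivalently compute its spectrum) using the characters of this cyclic group. First I would fix a primitive root $g$ modulo $p$ and relabel the rows and columns by the discrete logarithm, i.e.\ replace index $i$ by the unique $k \in \{0,1,\ldots,p-2\}$ with $g^k \equiv i \pmod p$. Under this relabeling, $R_{ij} = ij \rem p$ becomes a function of $k + \ell \bmod (p-1)$, so $R$ is conjugate (by a permutation matrix) to a circulant matrix $C$ of size $p-1$ whose first row is $(g^0 \rem p, g^1 \rem p, \ldots, g^{p-2} \rem p)$.

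The rank of a circulant is the number of nonzero values among its DFT coefficients $\widehat{c}_m = \sum_{k=0}^{p-2} (g^k \rem p)\, \omega^{km}$, where $\omega = e^{2\pi i/(p-1)}$. So the heart of the argument becomes: show that exactly $(p+1)/2$ of these $p-1$ character sums are nonzero. I would compute $\widehat{c}_m$ by recognizing $\sum_k (g^k \rem p)\chi(g^k)$ as a sum over $x \in (\ZZ/p\ZZ)^\times$ of $x \cdot \chi(x)$, where $\chi$ ranges over the multiplicative characters of $(\ZZ/p\ZZ)^\times$; this is a twisted/Gauss-type sum $\sum_{x=1}^{p-1} x\,\chi(x)$. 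The key evaluation is that $\sum_{x=1}^{p-1} x\,\chi(x) = 0$ whenever $\chi$ is an \emph{even} nontrivial character (one with $\chi(-1)=1$), because pairing $x$ with $p-x$ gives $\sum_x x\chi(x) = \sum_x (p-x)\chi(-x) = \chi(-1)(p\sum\chi(x) - \sum x\chi(x)) = -\sum_x x\chi(x)$ using $\sum_x \chi(x)=0$; whereas for $\chi$ trivial the sum is $\binom{p}{2} \not\equiv$ forced to vanish, and for $\chi$ odd the sum is (up to a unit factor coming from a Bernoulli-number / $L$-value computation, or simply a Gauss-sum manipulation) nonzero. Counting: the trivial character contributes $1$ nonzero eigenvalue, the odd characters number $(p-1)/2$, giving $1 + (p-1)/2 = (p+1)/2$ nonzero DFT coefficients in all.

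The main obstacle is establishing that the twisted sums for the odd characters are genuinely nonzero — the even case vanishes by an easy symmetry, but non-vanishing in the odd case is the substantive point. I expect to handle it either by relating $\sum_x x\chi(x)$ to a Gauss sum times $\overline{\chi}$-values (writing $x$ via additive characters and using that Gauss sums have absolute value $\sqrt p \neq 0$), or, to stay elementary and avoid analytic number theory entirely, by the following rational argument: the vectors $(u\rem p, 2u\rem p, \ldots, (p-1)u\rem p)$ are the columns of $R$, and I can instead directly exhibit $(p+1)/2$ independent columns and $(p-1)/2$ explicit linear relations among them. A natural candidate relation comes from summing $((a u)\rem p) + ((\,(p-a) u\,)\rem p)$, which equals the all-$p$'s vector minus a correction, yielding relations that cut the rank down to at most $(p+1)/2$; then a Vandermonde-type nonsingularity argument on a well-chosen $(p+1)/2 \times (p+1)/2$ submatrix gives the matching lower bound. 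I would present whichever of these two routes (character sums versus explicit relations) turns out to be cleaner, but I anticipate the explicit-relations approach is the safer bet for a self-contained proof, with the character-theoretic computation as conceptual motivation.
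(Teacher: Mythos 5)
Your reduction to a circulant is correct and is a genuinely different framework from the paper's: after relabeling by discrete logarithms, $\rank(R)$ does equal the number of nonzero twisted sums $\widehat{c}_m=\sum_{x=1}^{p-1}x\,\chi_m(x)$, the trivial character gives $p(p-1)/2\neq 0$, and your pairing argument correctly kills the $(p-3)/2$ nontrivial even characters. That already reproves the paper's upper bound (their Lemma on $R_{*,i}+R_{*,p-i}=(p,\ldots,p)^T$) by other means. The problem is the lower bound, which is where the paper does all of its real work (an induction on leading column blocks with explicit row and column eliminations). Your claim that $\sum_{x=1}^{p-1}x\,\chi(x)\neq 0$ for every odd $\chi$ is precisely the statement that the generalized Bernoulli number $B_{1,\chi}$ is nonzero, which is equivalent to the non-vanishing of $L(1,\overline{\chi})$; this is true (Dirichlet) but it is a genuine theorem, and the Gauss-sum manipulation you sketch does not deliver it. Expanding $x$ in additive characters gives $\sum_x x\chi(x)=\tau(\chi)\sum_{t\neq 0}\widehat{f}(t)\overline{\chi}(t)$ with $\widehat{f}(t)$ of the form $c/(1-\zeta^{-t})$; knowing $|\tau(\chi)|=\sqrt{p}\neq 0$ only moves the non-vanishing question to the remaining sum, which is again an $L(1,\chi)$-type quantity. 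For real odd $\chi$ (the Legendre symbol when $p\equiv 3\pmod 4$) this is the class-number-formula case and is not elementary. So as written, the character route has a hole exactly at the substantive step you flagged.

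Your fallback route does not close the hole either. The upper-bound half (pairing $a$ with $p-a$) coincides with the paper's argument, but ``a Vandermonde-type nonsingularity argument on a well-chosen $(p+1)/2\times(p+1)/2$ submatrix'' is not an argument: the entries of $R$ are $ij\rem p$, not powers $i^j$, and even the top-left $(p-1)/2\times(p-1)/2$ block already contains reduced entries (e.g.\ $3\cdot 3\rem 7=2$), so no Vandermonde structure is available and no candidate submatrix is identified. The paper's lower bound instead adjoins the all-$p$ column to the first $(p-1)/2$ columns, eliminates that column using rows $1$ and $2$, and then runs an induction in which row $3$ is reduced modulo row $1$ and divided by $-p$ to force each successive coefficient to vanish; some such concrete mechanism (or an explicit citation of $L(1,\chi)\neq 0$ for odd $\chi$) is needed before your proof of $\rank(R)\geq (p+1)/2$ can be considered complete.
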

\begin{proof}
  Lemma \ref{lem:rank-R-upper} shows that $(p+1)/2$ is an upper bound on the
  rank and Lemma~\ref{lem:rank-R-lower} shows that $(p+1)/2$ is a lower
  bound.
\end{proof}


\begin{lemma}\label{lem:rank-R-upper}
  $\rank(R) \le (p+1)/2$.
\end{lemma}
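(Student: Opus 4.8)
The plan is to find a small explicit spanning set for the column space of $R$. Recall $R = M \rem p$ with $M = uu^{T}$ and $u = (1,2,\dots,p-1)$, so $R$ is the symmetric matrix with $(i,j)$ entry $(ij)\rem p \in [1,p-1]$ for $1\le i,j\le p-1$. Since the integer rank of $R$ equals its rank over $\QQ$, it suffices to produce $(p+1)/2$ vectors over $\QQ$ whose span contains every column of $R$.

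The key observation I would establish first is a complementary-column identity. For any $1\le i,j\le p-1$ we have $i(p-j)\equiv -ij\pmod p$, and since $p$ is prime and $1\le i,j\le p-1$, neither $ij$ nor $i(p-j)$ is divisible by $p$; hence $(ij)\rem p$ and $(i(p-j))\rem p$ are complementary nonzero residues summing to $p$. Reading this down each row of $R$ gives
\begin{equation}\label{eq:complementary-cols}
  R_{*,j} + R_{*,p-j} = p\,\mathbf{1}, \qquad 1\le j\le p-1,
\end{equation}
where $\mathbf{1}=(1,\dots,1)^{T}\in\ZZ^{p-1}$.

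Next I would use \eqref{eq:complementary-cols} to trim the column set. For $j$ in the ``upper half'' $(p+1)/2\le j\le p-1$, put $j' = p-j\in\{1,\dots,(p-1)/2\}$; then $R_{*,j} = p\,\mathbf{1} - R_{*,j'}$ lies in $\op{span}_{\QQ}\{\mathbf{1},\,R_{*,1},\dots,R_{*,(p-1)/2}\}$. The remaining columns $R_{*,1},\dots,R_{*,(p-1)/2}$ lie in that span trivially, so the entire column space of $R$ is contained in the $\QQ$-span of the $1+(p-1)/2 = (p+1)/2$ vectors $\mathbf{1}, R_{*,1},\dots,R_{*,(p-1)/2}$, giving $\rank(R)\le (p+1)/2$.

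There is no serious obstacle here; the only points needing a line of justification are that $ij\not\equiv 0\pmod p$ so the residues in \eqref{eq:complementary-cols} are genuinely complementary, and that it is legitimate to pass to $\QQ$ since integer rank coincides with rational rank. The matching lower bound $\rank(R)\ge (p+1)/2$ is the separate content of Lemma~\ref{lem:rank-R-lower}, and the extension from $n=p-1$ to arbitrary $n\ge p$ in Lemma~\ref{lem:rank-1-rem} will follow by the same complementary-residue argument applied to the vector $u$ there (the pigeonhole remark in the proof of Lemma~\ref{lem:rank-A-rem-p} reducing to the $n=p-1$ case).
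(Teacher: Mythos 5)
Your proposal is correct and follows essentially the same route as the paper: both rest on the complementary-column identity $R_{*,j} + R_{*,p-j} = p\,(1,\dots,1)^{T}$, derived from $i(p-j)\equiv -ij \pmod p$ with $p\nmid ij$, and both conclude that the column space is spanned by $(p+1)/2$ vectors. Your version is slightly more explicit than the paper's in naming the spanning set $\{(1,\dots,1)^{T}, R_{*,1},\dots,R_{*,(p-1)/2}\}$, but the underlying argument is identical.
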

\begin{proof}
  Let $1 \le j \le (p-1)/2$ and $1 \le i \le p-1$. Write $ij = qp + r$ where $0
  \le r < p$. Also $i,j < p \implies p \nmid i \land p \nmid j$, which implies
  $r \neq 0$. Then $i(p-j) = (i-q-1) + (p-r)$ where $0 < (p-r) < p$. So $ij \rem
  p + i(p-j) \rem p = r + (p-r) = p$. But $R_{i,j} = ij \rem p$, so for all $1
  \le i \le (p-1)/2$ we have $R_{*,i} = (p, p, \ldots, p)^{T} - R_{*,
    p-i}$. Thus there are $(p-1)/2$ linearly dependent columns, and no more than
  $(p+1)/2$ linearly independent columns.
\end{proof}





To prove that $(p+1)/2$ is also a lower bound on the rank, it suffices (using
Lemma~\ref{lem:rank-R-upper}) to consider the matrix $B$ of size ${(p-1) \times
  \frac{p+1}{2}}$ which is formed by the first $(p-1)/2$ columns of $R$ and the
column $B_{*, (p+1)/2} = R_{*, (p+1)/2} + R_{*, (p-1)/2} = (p, \ldots, p)^{T}$.
The matrix $B $ has the following structure:
\[ B = \begin{bmatrix}
  1 & 2 & \cdots & \frac{p-1}{2} & p \\
  2 & 4 & \cdots & p-1 & p \\
  3 & 6 \rem p & \cdots & 3\frac{p-1}{2} \rem p & p \\
  \vdots & \vdots & \ddots & \vdots \\
  (p-1) \rem p & 2(p-1) \rem p & \cdots & \frac{(p-1)^2}{2}\rem p & p
  \end{bmatrix}. \]

  \begin{lemma}\label{lem:extra-induction}
  Either the right kernel of $B$ is empty, or the first $(p-1)/2$ columns of $B$
  are linearly dependent.
\end{lemma}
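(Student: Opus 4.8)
The plan is to show directly that \emph{any} vector in the right kernel of $B$ must have its last coordinate equal to zero, after which the asserted dichotomy follows in one line. Write $m = (p-1)/2$ and let $x = (x_1, \ldots, x_m, x_{m+1})^{T}$ be an arbitrary element of the right kernel of $B$, so $Bx = 0$; here $x_1, \ldots, x_m$ are the coefficients of the first $(p-1)/2$ columns of $B$ (which are the first $(p-1)/2$ columns of $R$) and $x_{m+1}$ is the coefficient of the all-$p$'s column $(p, \ldots, p)^{T}$. The first two rows of $B$ are $(1, 2, \ldots, m, p)$ and $(2, 4, \ldots, p-1, p)$, since $R_{1,j} = j$ and $R_{2,j} = 2j \rem p = 2j$ for $1 \le j \le m$ (as $2j \le p-1 < p$, no reduction occurs), and the last entries of both rows equal $p$. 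The key observation is the relation between these two rows: $2 \cdot (\text{row }1) - (\text{row }2) = (0, \ldots, 0, p)$. Pairing this with $x$ and using $Bx = 0$ gives $p\microspace x_{m+1} = 2 \cdot 0 - 0 = 0$, hence $x_{m+1} = 0$.

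Once $x_{m+1} = 0$, the equation $Bx = 0$ restricted to each row $i$ reads $\sum_{j=1}^{m} R_{i,j} x_j = 0$, i.e.\ $B' x' = 0$, where $B'$ is the submatrix consisting of the first $(p-1)/2$ columns of $B$ and $x' = (x_1, \ldots, x_m)^{T}$. Hence, if $x \neq 0$ then, since its last coordinate vanishes, $x' \neq 0$, and $B'x' = 0$ is a nontrivial linear dependence among the first $(p-1)/2$ columns of $B$. This is precisely the stated dichotomy: either no nonzero $x$ lies in the right kernel, or, choosing such an $x$, we obtain the asserted column dependence.

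The only points requiring attention are routine bookkeeping: verifying that $2j$ is not reduced modulo $p$ for $1 \le j \le m$ (so that the second row of $B$ really is twice the first row off the last entry), and noting that the last entries of the first two rows of $B$ coincide, which is what makes the coefficient of $x_{m+1}$ in the combination $2 \cdot (\text{row }1) - (\text{row }2)$ come out to exactly $p$, and in particular nonzero. I do not expect any genuine obstacle here; the entire content lies in spotting the relation between the first two rows of $B$. (Equivalently one can use $R_{i,j} = ij - p \lfloor ij/p \rfloor$, so that $B'$ is a rank-one integer matrix minus $p$ times an integer matrix; substituting into $B'x' = \beta\microspace(1, \ldots, 1)^{T}$ with $\beta := \sum_{j} j x_j$ and reading off row $i = 2$, where every floor $\lfloor 2j/p\rfloor$ vanishes, again forces $\beta = -p\microspace x_{m+1}$ and then $x_{m+1} = 0$.)
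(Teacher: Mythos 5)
Your proof is correct and follows essentially the same route as the paper: both arguments restrict the kernel relation to the first two rows of $B$, exploit that row $2$ equals twice row $1$ except in the last column (where both entries are $p$), and conclude that the coefficient of the all-$p$'s column must vanish, leaving a dependence among the first $(p-1)/2$ columns. Your write-up is in fact a cleaner rendering of the paper's somewhat terse "contradiction or $c_{(p+1)/2}=0$" step.
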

\begin{proof}
  If the right kernel of $B$ is not empty, then there exists $(p+1)/2$ integers
  $c_1, \ldots, c_{(p+1)/2}$ not identically zero, such that
  \begin{equation}
    c_1 B_{*, 1} + c_2 B_{*, 2} + \ldots + c_{(p+1)/2} B_{*, (p+1)/2} = 0.
  \end{equation}
  Apply this linear combination simultaneously to the first two rows of $B$ to
  get
  \begin{eqnarray}
    c_1 + 2c_2 + \ldots + c_{(p-1)/2}\ (p-1)/2 = - c_{(p+1)/2}\ p
    \label{eq:lin-comb-2} \\
    2c_1 + 4c_2 + \ldots + c_{(p-1)/2}\ (p-1) = - c_{(p+1)/2}\ p
    \label{eq:lin-comb-3}
  \end{eqnarray}
  But \eqref{eq:lin-comb-2} implies either a contradiction in
  \eqref{eq:lin-comb-3}: the right kernel of $B$ is empty, or $c_{(p+1)/2}
  = 0$ and the first $(p-1)/2$ columns of $B$ are linearly dependent.
\end{proof}

\begin{lemma}\label{lem:rank-R-lower}
  $(p+1)/2 \le \rank(R)$.
\end{lemma}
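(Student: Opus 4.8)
The plan is to derive the lower bound from Lemma~\ref{lem:rank-R-upper} together with Lemma~\ref{lem:extra-induction}: it is enough to prove that the first $(p-1)/2$ columns of $R$ (equivalently, of $B$) are linearly independent over $\QQ$. Granting that, the first $(p-1)/2$ columns of $B$ are not linearly dependent, so by Lemma~\ref{lem:extra-induction} the right kernel of $B$ must be empty, whence $\rank(B)=(p+1)/2$; since every column of $B$ lies in the column space of $R$ (the last column of $B$ is $R_{*,(p+1)/2}+R_{*,(p-1)/2}$), this yields $\rank(R)\ge(p+1)/2$, and combining with Lemma~\ref{lem:rank-R-upper} gives equality.

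To prove the columns $R_{*,1},\ldots,R_{*,(p-1)/2}$ are independent I would pass to the row space. Replacing the $i$th row by ``row $i$ minus $i\cdot$(row $1$)'' turns the entry $ij\bmod p$ into $ij\bmod p-ij=-p\lfloor ij/p\rfloor$, so the row space is spanned by $\ell:=(1,2,\ldots,(p-1)/2)$ together with the vectors $w_i:=(\lfloor ij/p\rfloor)_{j=1}^{(p-1)/2}$ for $i=1,\ldots,p-1$. Here $w_1=w_2=0$ because $2j\le p-1<p$ when $j\le(p-1)/2$, and separating the quotient in $\lfloor(p-k)j/p\rfloor=j-1-\lfloor kj/p\rfloor$ gives $w_{p-k}=\ell-e_0-w_k$ for all $k$, where $e_0=(1,\ldots,1)$. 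Hence $e_0=\ell-w_{p-1}$ lies in the span and the span equals that of the $(p-1)/2$ vectors $\{\ell,e_0,w_3,w_4,\ldots,w_{(p-1)/2}\}$. Finally, each $w_i$ with $3\le i\le(p-1)/2$ has $j$th coordinate $0$ for $j=1,2$, so it lies in $\langle e_3,\ldots,e_{(p-1)/2}\rangle$, while $\ell$ and $e_0$ restrict to the independent pair $(1,2),(1,1)$ in the first two coordinates; so the whole statement reduces to showing that the $\tfrac{p-5}{2}\times\tfrac{p-5}{2}$ integer matrix $W^{*}=(\lfloor ij/p\rfloor)_{3\le i,j\le(p-1)/2}$ is nonsingular.

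For the nonsingularity of $W^{*}$ I would run a telescoping elimination on its rows: $w_3$ is the tail vector that equals $1$ exactly for $j\ge\lceil p/3\rceil$, and each difference $w_{i+1}-w_i$ has $j$th coordinate $[\,ij\bmod p\ge p-j\,]\in\{0,1\}$, its support being determined by where the line $y=(i/p)x$ meets integer levels. The key combinatorial fact to nail down is that the thresholds $\lceil tp/i\rceil$ occurring here ($1\le t<i/2$, $3\le i\le(p-1)/2$) are pairwise distinct and interleave so that, after reordering rows, the system becomes triangular with unit diagonal---a Farey / three-distance configuration. I expect this bookkeeping, rather than the reduction above, to be the main obstacle. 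A possible alternative is to diagonalise $R$ by indexing rows and columns by powers of a primitive root $g$ modulo $p$, which makes $R$ the symmetric Hankel--circulant matrix $(h(a+b))$ with $h(m)=g^{m}\bmod p$; using $g^{m+(p-1)/2}\equiv-g^{m}\pmod p$ one computes that the discrete Fourier coefficients $\widehat h(k)=\sum_{m}(g^{m}\bmod p)\,\omega^{km}$ vanish exactly for the nonzero even $k$, whence $\rank(R)$ is the number of nonzero $\widehat h(k)$, namely $(p+1)/2$; in that route the obstacle becomes showing $\widehat h(k)\ne 0$ for every odd $k$, i.e.\ that $\sum_{m}\bigl(2(g^{m}\bmod p)-p\bigr)x^{m}$ has no root in common with $x^{(p-1)/2}+1$.
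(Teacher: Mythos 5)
Your reduction is correct as far as it goes, and it is genuinely different from the paper's: after invoking Lemma~\ref{lem:rank-R-upper} and Lemma~\ref{lem:extra-induction} (same first step as the paper), you pass to the row space, use the identity $\lfloor (p-k)j/p\rfloor = j-1-\lfloor kj/p\rfloor$ to fold the rows in half, and correctly reduce the whole lemma to the nonsingularity of $W^{*}=(\lfloor ij/p\rfloor)_{3\le i,j\le (p-1)/2}$. The problem is that this last claim \emph{is} the lemma --- all of the difficulty has been relocated into it --- and neither of your two proposed ways of establishing it is actually carried out. Worse, the first one is broken as stated: the thresholds $\lceil tp/i\rceil$ are \emph{not} pairwise distinct in general. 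Already for $p=11$ one has $\lceil 11/4\rceil = \lceil 11/5\rceil = 3$, so the "reorder rows to get a unit triangular system" step has no chance of working verbatim and would need a genuinely new idea to repair. The second route (diagonalizing the Hankel--circulant $(g^{a+b}\bmod p)$) is structurally sound and your computation that $\widehat h(k)=0$ for nonzero even $k$ reproves the upper bound, but the remaining obstacle you name --- $\widehat h(k)=\sum_a a\,\chi(a)\neq 0$ for every odd character $\chi$ mod $p$ --- is equivalent to the nonvanishing of $B_{1,\chi}$, i.e.\ of $L(1,\chi)$ for odd $\chi$ (this is the Maillet-determinant circle of ideas). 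That is a true but deep fact; citing it would close the gap, but you neither cite nor prove it, so as written the proposal does not prove the lemma.

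For comparison, the paper avoids all of this with an elementary induction on the number $k$ of columns of $B$: assuming a kernel vector for $B^{(k)}$, it clears row $3$ against row $1$ and divides by $-p$ (Lemma~\ref{lem:aux-3j-rem}), so that row $3$ becomes the $0$--$1$ pattern $(\lfloor 3j/p\rfloor)_j$, and a short sequence of column operations turns that row into $(0,\ldots,0,1)$, forcing $c_k=0$ and reducing to $B^{(k-1)}$ (Lemma~\ref{lem:inductive}). If you want to salvage your approach, the cleanest fix is to either supply a correct elimination argument for $W^{*}$ or explicitly invoke $L(1,\chi)\neq 0$ in the Fourier route; otherwise the argument is incomplete.
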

\begin{proof}
  Using Lemma \ref{lem:extra-induction}, proving a lower bound on the rank of
  $R$ can be reduced to showing that the first $(p-1)/2$ columns of $B$ are
  linearly independent. We use induction. Consider the sequence of matrices
  $B^{(k)}$ formed by the first $k$ columns of $B$, where $2 \le k \le (p-1)/2$.
  The base case of induction, $B^{(2)}$, has rank $2$ which is straightforward
  to verify. For the inductive case, we assume $B^{(k-1)}$ has rank $k-1$, and
  use Lemma \ref{lem:inductive} to deduce that $B^{(k)}$ has rank $k$.
\end{proof} 

The following lemma is needed before proving Lemma \ref{lem:inductive}.

\begin{lemma}\label{lem:aux-3j-rem}
  For all $j \ge 1$, $(3j \rem p) - 3j = -pq$ for some integer $q \ge 0$.
\end{lemma}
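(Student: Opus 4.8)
The plan is to read the claim directly off the definition of Euclidean division, so the argument is a one-line computation together with a sign check; there is essentially no obstacle here. First I would fix $j \ge 1$ and write the Euclidean division of the positive integer $3j$ by $p$ as $3j = qp + r$ with $0 \le r < p$. By definition, $q = 3j \quo p$ and $r = 3j \rem p$.

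Next I would substitute to get $(3j \rem p) - 3j = r - (qp + r) = -qp$, which already has the required shape $-pq$. It then remains only to verify $q \ge 0$. Since $3j \ge 3 > 0$ and $p \ge 2$, the quotient $q = \lfloor 3j/p \rfloor$ is a non-negative integer; equivalently, $qp = 3j - r \ge 3j - (p-1) \ge 0$ together with $p > 0$ forces $q \ge 0$. This finishes the proof.

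The only point worth flagging is a usage remark rather than a difficulty: the effective content of the lemma is the inequality $3j \rem p \le 3j$, with the case $q = 0$ occurring exactly when $3j < p$; this is what will be used to control the entries of the third row of $B$ in the inductive step leading to Lemma~\ref{lem:inductive}. No case analysis, no induction, and no appeal to the primality of $p$ are needed—only that $p$ is a positive integer and that $3j$ is non-negative.
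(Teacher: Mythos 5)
Your proof is correct and takes essentially the same approach as the paper: write the Euclidean division $3j = qp + r$ and subtract. The paper leaves the check $q \ge 0$ implicit, which you spell out; otherwise the arguments are identical.
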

\begin{proof}
  Write $3j$ as $3j = qp + r$ where $r = 3j \rem p$ and $q = 3j \quo p$. Then $r
  - 3j = -qp$.
\end{proof}

\begin{lemma}\label{lem:inductive}
  Let $B^{(k)}$ be the $(p-1) \times k$ integer matrix in proof of
  Lemma~\ref{lem:rank-R-lower}. Either $B^{(k)}$ has column rank $k$, or
  $B^{(k-1)}$ is rank deficient.
\end{lemma}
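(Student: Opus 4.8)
The plan is to prove the contrapositive: if $B^{(k)}$ does not have column rank $k$, then $B^{(k-1)}$ is rank deficient. Suppose $B^{(k)} c = 0$ for some nonzero integer vector $c = (c_1, \ldots, c_k)$. If $c_k = 0$, then $(c_1, \ldots, c_{k-1})$ is a nonzero element of the right kernel of $B^{(k-1)}$ and we are done, so assume $c_k \neq 0$; the task is then to produce a nontrivial relation among the first $k-1$ columns of $B$, or equivalently to reach a contradiction (which also proves the lemma).

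First I would separate off the ``linear part'' of the relation. Because $k \le (p-1)/2 < p$, the first row of $B^{(k)}$ is exactly $(1, 2, \ldots, k)$ and the second is $(2, 4, \ldots, 2k)$, so both contribute only the single equation $\sum_{j=1}^{k} j c_j = 0$. For the third row I would invoke Lemma~\ref{lem:aux-3j-rem} to write $3j \rem p = 3j - p q_j$ with $q_j = 3j \quo p \ge 0$; substituting this into the vanishing of the third row and cancelling $3$ times the first-row relation yields the integer identity $\sum_{j=1}^{k} q_j c_j = 0$. Performing the same substitution $ij \rem p = ij - p(ij \quo p)$ in every row $i = 1, \ldots, p-1$ and again discarding the term $i \sum_j j c_j = 0$ shows that $c$ annihilates the ``quotient matrix'' $N$ whose $(i,j)$ entry is $ij \quo p$ as well. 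Hence $c$ lies in the intersection of the hyperplane $\{\, \sum_j j c_j = 0 \,\}$ with the right kernel of the submatrix $N^{(k)}$ formed by the first $k$ columns of $N$.

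Next I would exploit the structure of $N$. Its first column is zero (since $ij \quo p = 0$ when $i = 1$), so the relation $\sum_j j c_j = 0$ merely fixes $c_1$ in terms of $c_2, \ldots, c_k$, and the remaining content is a dependence with nonzero last coefficient among columns $2, \ldots, k$ of $N$. I would then pass to successive differences of the columns: every column of $N$ starts with a $0$, so taking differences is injective, and the difference of column $j$ is the $0/1$ indicator vector $\mathbf{1}_{S_j}$ of the set $S_j = \{\, \lfloor p/j \rfloor, \lfloor 2p/j \rfloor, \ldots, \lfloor (j-1)p/j \rfloor \,\}$, these $j-1$ values being distinct because $p/j > 1$. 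The statement thus reduces to a purely combinatorial one: if $\mathbf{1}_{S_2}, \ldots, \mathbf{1}_{S_{k-1}}$ are linearly independent, then $\mathbf{1}_{S_k}$ is not in their span. Whenever $S_k$ contains a coordinate outside $S_2 \cup \cdots \cup S_{k-1}$ this is immediate; in the remaining cases (which arise exactly when $\lfloor p/(k-1) \rfloor = \lfloor p/k \rfloor$) one must instead isolate coordinates at which an individual $S_j$ appears alone, forcing the hypothetical coefficients to vanish one at a time, and then derive a contradiction from a still-uncovered coordinate of $S_k$.

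The main obstacle is this last combinatorial step. The difficulty is that for small $k$ the cheap rows are useless --- in every row $i$ with $ik < p$ no column among the first $k$ has yet wrapped around $p$, so such a row is just a scalar multiple of $(1, \ldots, k)$ --- and there need not be a single witnessing row; one has to navigate the overlaps among the Beatty-type sets $S_2, \ldots, S_k$, and it is the inductive hypothesis --- that only one new column must be separated off at each step --- that keeps this manageable.
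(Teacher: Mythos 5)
Your reduction is sound as far as it goes, and it is in fact a more systematic version of what the paper does: the paper exploits only the row $i=3$, subtracting $3\times(\text{row }1)$ and dividing by $-p$ to expose the single quotient row $(\lfloor 3j/p\rfloor)_j=(0,\dots,0,1,\dots,1)$ and then pivoting on it, whereas you correctly observe that the relation $\sum_j jc_j=0$ coming from row $1$ lets you replace all of $B^{(k)}$ by the quotient matrix $N$ with $N_{i,j}= ij\quo p$, and that differencing down each column of $N$ turns the problem into one about the $0/1$ indicator vectors of the sets $S_j=\{\lfloor mp/j\rfloor : 1\le m\le j-1\}$. That part checks out: column $1$ of $N$ is zero, the $j-1$ elements of $S_j$ are distinct because $p/j>1$, and differencing loses nothing since every column of $N$ begins with $0$.

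The difficulty is that the entire content of the lemma now sits in the one claim you do not prove: that if $\mathbf{1}_{S_2},\dots,\mathbf{1}_{S_{k-1}}$ are independent then $\mathbf{1}_{S_k}$ is not in their span. Your ``immediate'' subcase (some coordinate of $S_k$ lies outside $S_2\cup\dots\cup S_{k-1}$) is fine when it applies, but you give no criterion for when it applies, and it genuinely fails within the allowed range of $k$: for $p=13$, $k=6$ one has $S_6=\{2,4,6,8,10\}$ while $S_2\cup S_3\cup S_4\cup S_5=\{2,3,4,5,6,7,8,9,10\}$, so $S_6$ is entirely covered. (The conclusion still holds there --- coordinate $3$ forces the coefficient of $\mathbf{1}_{S_4}$ to vanish, coordinate $5$ forces that of $\mathbf{1}_{S_5}$ to vanish, and then coordinate $2$ gives a contradiction --- but that is exactly the kind of case-by-case peeling you defer.) Your description of the remaining case, ``isolate coordinates at which an individual $S_j$ appears alone, forcing the coefficients to vanish one at a time, and then derive a contradiction from a still-uncovered coordinate of $S_k$,'' is a restatement of the goal rather than an argument: you have not shown that such isolating coordinates exist for general $p$ and $k$, in what order they can be peeled off, or that a witnessing coordinate of $S_k$ survives at the end. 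Until that combinatorial step is actually carried out, the proof is incomplete; the paper avoids this entirely by working with the single row $(\lfloor 3j/p\rfloor)_j$ and elementary pivoting to force $c_k=0$.
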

\begin{proof}
  If the right kernel of $B^{(k)}$ was not empty, then there exists integers
  $c_1, \ldots, c_k$ not identically zero, such that
  \begin{eqnarray}\label{eq:lin-comb-matrix}
    \begin{bmatrix}
       1 & 2 & \cdots & k \\
       2 & 4 & \cdots & 2k \\
       3 & 6 \rem p & \cdots & (3k) \rem p \\
       & & \ddots
    \end{bmatrix}
    \begin{bmatrix} c_1 \\ \vdots \\ c_k \end{bmatrix}
    =
    \begin{bmatrix} 0 \\ \vdots \\ 0 \end{bmatrix}.
  \end{eqnarray}
  We then perform the following row operations on the left-hand side of
  \eqref{eq:lin-comb-matrix}: replace (row $3$) by (row 3$)- 3 \times ($row
  $1$), then divide row $3$ by $-p$. From Lemma \ref{lem:aux-3j-rem}, we have
  that row $3$ is now
  \[ \begin{bmatrix} 0 & \cdots & 0 & 1 & \cdots & 1 & 2 \cdots &
    q \end{bmatrix}, \]
  for some $q$ (in fact, $q = (3k) \quo p$). We then
  perform the following column operations: let $\ell$ denote the column index
  where the first $1$ appears in row $3$ ($\ell$ is guaranteed to be greater
  than or equal $1$ since for all $p > 3$, $k \le (p-1)/2$, we have $3k > p$.)
  Pivot on entry $\ell$ in row $3$ and eliminate all entries of row $3$ with
  indices between $\ell + 1$ and $k - 1$. Subtract $q-1$ multiples of column
  $\ell$ from column $k$. Then pivot on entry $k$ of row $3$ and subtract column
  $k$ from column $\ell$. Effectively, this sequence of operations transforms
  row $3$ into:
  \[ \begin{bmatrix} 0 & \cdots & 0 & 1 \end{bmatrix}. \]
  The right-hand side
  of \eqref{eq:lin-comb-matrix} is zero, and hence not effected by the
  aforementioned elementary operations.
  
  Finally, the transformed row $3$ implies either that $c_k$ is zero, or the
  existence of $c_1, \ldots, c_k$ is contradictory. This proves the statement of
  the lemma.
\end{proof}


We are now ready to generalize Lemma \ref{lem:rank-R} and prove Lemma
\ref{lem:rank-1-rem}.
\begin{proof}[Proof of Lemma~\ref{lem:rank-1-rem}]
  For the column vector $u \in \ZZ^{n\times 1}$, consider the matrix
  $\widehat{R} \in \ZZ^{n\times n} = u u^{T} \rem p$, which is analogous to the
  matrix $R$ of Lemma \ref{lem:rank-R}.  The image of $u \rem p$ has entries
  from the interval $[0, p-1]$. If $n > p$ then, by the pigeonhole principle,
  the vector $u \rem p$ will contain duplicate (and zero) entries, which
  correspond to duplicate and zero rows in $\widehat{R}$. So up to row/column
  permutations, $\widehat{R}$ contains $R$ as a submatrix, and the extra
  rows/columns are duplicate and/or zero. Hence $\rank(\widehat{R}) = \rank(R)$.
\end{proof}

\subsection{A Note on Ranks of Latin Squares}\label{sec:rank-latin}

It is worth noting that Lemma~\ref{lem:rank-R} also implies a result on the
ranks of Latin squares of certain orders. As before, let $p$ be an odd prime, and
let $R$ be the $(p-1) \times (p-1)$ integer matrix whose $(i, j)$th entry is $ij
\rem p$. We show that $R$ is a Latin square as follows. $R$ is the Cayley
multiplication table of the finite field $\ZZ/p\ZZ$, excluding the element $0$.
Since $\ZZ/p\ZZ$ is an integral domain, we have $ij \rem p \neq
ij' \rem p$ whenever $j \neq j'$ (where $i, j, j' \in [1, p-1]$).
So every row/column of $R$ has the residues
$\{ 1, \ldots, p-1 \}$ appearing only once, and $R$ is a Latin square of order
$p-1$. $R$ has rank $1$ over $\ZZ/p\ZZ$ and non-trivial rank over $\ZZ$ by
Lemma~\ref{lem:rank-R} as stated in the following corollary.

\begin{corollary}\label{cor:rank-latin}
  Let $p$ be any odd prime, and let $R$ be any Latin square of order $p-1$ on
  the symbols $\{ 1, \ldots, p-1 \}$. Then the integer rank of $R$, taken as a
  $(p - 1) \times (p - 1)$ integer matrix, is $(p + 1)/2$.
\end{corollary}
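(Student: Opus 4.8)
The plan is to establish the rank formula for an \emph{arbitrary} Latin square $L$ of order $p-1$ on the symbols $\{1,\ldots,p-1\}$ by comparing it with the canonical multiplicative table $R$ of Lemma~\ref{lem:rank-R}, whose integer rank is already pinned at $(p+1)/2$. Since permuting rows and columns preserves the integer rank (whereas relabeling symbols does not), I may normalize $L$ only up to such permutations, bringing it into reduced form whose first row and first column each read $(1,2,\ldots,p-1)$; crucially I cannot simply relabel $L$ into $R$. The target is then to show that every reduced Latin square of this order has integer rank exactly $(p+1)/2$.

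For the lower bound I would mimic Lemmas~\ref{lem:rank-R-lower}, \ref{lem:extra-induction}, and \ref{lem:inductive}. The top-left $2\times\frac{p+1}{2}$ block of the normalized $L$ is built from the first row $(1,2,\ldots,p-1)$ and a second row that is again a permutation of $\{1,\ldots,p-1\}$, and I would try to run the same kernel-collapsing induction, adding one independent column at a time, to force $\rank(L)\ge (p+1)/2$. The row- and column-sum identity $L\mathbf{1}=\tfrac{(p-1)p}{2}\mathbf{1}$, valid for every Latin square on these symbols, supplies one guaranteed nonzero eigenvalue and can seed the independence argument.

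The hard part --- and in fact the crux of the whole statement --- is the matching upper bound $\rank(L)\le (p+1)/2$. In Lemma~\ref{lem:rank-R-upper} this came \emph{entirely} from the column-pairing identity $R_{*,i}+R_{*,p-i}=(p,\ldots,p)^{T}$, i.e.\ from $ij\rem p + i(p-j)\rem p = p$, a feature of multiplication modulo $p$. To carry the bound through for a general Latin square I would need to manufacture $(p-1)/2$ independent linear dependencies among its columns using only the Latin property --- each symbol occurring once per row and column, equivalently the permutation-matrix decomposition $L=\sum_{s=1}^{p-1} s\,P_s$ with $\sum_{s} P_s = J$. Extracting that many relations from this purely combinatorial data, uniformly and without the arithmetic of $\ZZ/p\ZZ$, is where I expect the real difficulty to lie: the combinatorics of a generic Latin square looks too weak to pin its rank to a single value, so I anticipate that forcing the upper bound through will push me back onto the multiplicative structure of $R$, and that securing the statement exactly as worded is the delicate point the proof must confront.
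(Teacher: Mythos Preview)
Your instinct is exactly right, and the difficulty you flag at the end is not merely delicate --- it is fatal. The corollary, as worded for an \emph{arbitrary} Latin square of order $p-1$, is false. For $p=5$ take the Latin square
\[
L=\begin{bmatrix}1&2&3&4\\2&3&4&1\\3&4&1&2\\4&1&2&3\end{bmatrix}
\]
(the addition table of $\ZZ/4\ZZ$ written on the symbols $\{1,2,3,4\}$). A direct computation gives $\det L=160\neq 0$, so $\rank(L)=4$, not $(p+1)/2=3$.

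The paper does not in fact prove the corollary in the stated generality. The paragraph preceding it only argues that the \emph{specific} matrix $R$ with entries $R_{i,j}=ij\rem p$ happens to be a Latin square, and then invokes Lemma~\ref{lem:rank-R} to obtain $\rank(R)=(p+1)/2$. The word ``any'' in the corollary is an overreach: the conclusion holds for the multiplicative Cayley table $R$ (and for anything obtained from it by row and column permutations, since those preserve rank), but not for all Latin squares on $\{1,\ldots,p-1\}$. Your diagnosis --- that the upper bound in Lemma~\ref{lem:rank-R-upper} rests entirely on the arithmetic identity $R_{*,j}+R_{*,p-j}=(p,\ldots,p)^T$, and that the bare Latin property supplies no substitute for it --- is therefore exactly the point. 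There is nothing further to prove along your outline, because the general statement is simply not true.
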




\section{$p$-adic Matrices}\label{sec:padic}

We now switch the focus to ranks of $p$-adic matrices. Ranks in this section are
over the finite field with $p$ elements\footnote{The two ranks, over $\ZZ$ and
  over $\ZZ/p\ZZ$, are equal unless $p$ is an elementary divisor of the
  matrix.}, with residue classes $\{ 0, 1, \ldots, p - 1 \}$.
For any prime $p$ and any matrix $M \in \ZZ^{n\times n}$ with entries $| m_{i,
  j} | < \beta$, the $p$-adic expansion of $M$ is $M = M^{[0]} + p M^{[1]} +
\ldots + p^s M^{[s]}$ where the entries of each matrix $M^{[i]}$ are between
$[0, p-1]$, and $s \le \lceil \log_p \beta \rceil$. We call $M^{[i]}$ the $i$th
$p$-adic matrix digit of $M$. We extend the superscript ${[i]}$ notation to
vectors and integers in the obvious way.

We present results concerning the $2$-adic matrix digits. For odd primes, we
only present a conjecture. It is an open question to study the combinatorial
structure of the column space of the $p$-adic matrix digits for primes other
than $2$.

\subsection{Binary code matrices}

Fix $p = 2$. The goal of this section is to show that for all $i \ge 1$,
$\rank_p(M^{[i]}) = {r \choose 2^i}$ where $M = A A^{T}$ for some specially
constructed $A$, which we call \emph{binary code} matrix. We will generalize the
construction of $M$ in a subsequent section. For now, $A$ is constructed as
follows. Start with the $2^r \times r$ matrix whose ${i, j}$ entry is the $j$th
bit in the binary expansion of $i$.  Then apply row permutations to $A$ such
that the first $r \choose 0$ rows have have exactly $0$ non-zero entries,
followed by $r \choose 1$ rows which have exactly $1$ non-zero entries, followed
by $r \choose 2$ rows which have exactly $2$ non-zero entries and so on. See
Figure \ref{fig:binarycode-example} for an example where $r = 4$.

\begin{figure*}[t]
  \label{fig:binarycode-example}
  \begin{equation*}
    \left[ \begin{array}{cccc}
        0 & 0 & 0 & 0 \\
        \hline
        0 & 0 & 0 & 1 \\
        0 & 0 & 1 & 0 \\
        0 & 1 & 0 & 0 \\
        1 & 0 & 0 & 0 \\
        \hline
        0 & 0 & 1 & 1 \\
        0 & 1 & 0 & 1 \\
        0 & 1 & 1 & 0 \\
        1 & 0 & 0 & 1 \\
        1 & 0 & 1 & 0 \\
        1 & 1 & 0 & 0 \\
        \hline
        0 & 1 & 1 & 1 \\
        1 & 0 & 1 & 1 \\
        1 & 1 & 0 & 1 \\
        1 & 1 & 1 & 0 \\
        \hline
        1 & 1 & 1 & 1
      \end{array} \right],
    \left[ \begin{array}{c | cccc | cccccc| cccc | c}
        0 & 0 & 0 & 0 & 0 & 0 & 0 & 0 & 0 & 0 & 0 & 0 & 0 & 0 & 0 & 0 \\
        \hline
        0 & 1 & 0 & 0 & 0 & 1 & 1 & 0 & 1 & 0 & 0 & 1 & 1 & 1 & 0 & 1 \\
        0 & 0 & 1 & 0 & 0 & 1 & 0 & 1 & 0 & 1 & 0 & 1 & 1 & 0 & 1 & 1 \\
        0 & 0 & 0 & 1 & 0 & 0 & 1 & 1 & 0 & 0 & 1 & 1 & 0 & 1 & 1 & 1 \\
        0 & 0 & 0 & 0 & 1 & 0 & 0 & 0 & 1 & 1 & 1 & 0 & 1 & 1 & 1 & 1 \\
        \hline
        0 & 1 & 1 & 0 & 0 & 2 & 1 & 1 & 1 & 1 & 0 & 2 & 2 & 1 & 1 & 2 \\
        0 & 1 & 0 & 1 & 0 & 1 & 2 & 1 & 1 & 0 & 1 & 2 & 1 & 2 & 1 & 2 \\
        0 & 0 & 1 & 1 & 0 & 1 & 1 & 2 & 0 & 1 & 1 & 2 & 1 & 1 & 2 & 2 \\
        0 & 1 & 0 & 0 & 1 & 1 & 1 & 0 & 2 & 1 & 1 & 1 & 2 & 2 & 1 & 2 \\
        0 & 0 & 1 & 0 & 1 & 1 & 0 & 1 & 1 & 2 & 1 & 1 & 2 & 1 & 2 & 2 \\
        0 & 0 & 0 & 1 & 1 & 0 & 1 & 1 & 1 & 1 & 2 & 1 & 1 & 2 & 2 & 2 \\
        \hline
        0 & 1 & 1 & 1 & 0 & 2 & 2 & 2 & 1 & 1 & 1 & 3 & 2 & 2 & 2 & 3 \\
        0 & 1 & 1 & 0 & 1 & 2 & 1 & 1 & 2 & 2 & 1 & 2 & 3 & 2 & 2 & 3 \\
        0 & 1 & 0 & 1 & 1 & 1 & 2 & 1 & 2 & 1 & 2 & 2 & 2 & 3 & 2 & 3 \\
        0 & 0 & 1 & 1 & 1 & 1 & 1 & 2 & 1 & 2 & 2 & 2 & 2 & 2 & 3 & 3 \\
        \hline
        0 & 1 & 1 & 1 & 1 & 2 & 2 & 2 & 2 & 2 & 2 & 3 & 3 & 3 & 3 & 4
      \end{array} \right]
  \end{equation*}
  \caption{An example of $A$ (left) and $M = AA^T$ (right), where $r = 4$. The
    rows of $A$ are partitioned by the number of non-zero entries in each row.
    The corresponding blocks in the symmetric matrix $M$ are shown with
    borders. The column partitions of $M$ are $\mm_0$, $\mm_1$, $\mm_2$,
    $\mm_3$, $\mm_4$.  And $\rank_p(M^{[0]}) = \rank_p(\mm_1^{[0]}) = 4$,
    $\rank_p(M^{[1]}) = \rank_p(\mm_2^{[1]}) = 6$, $\rank_p(M^{[2]}) =
    \rank_p(\mm_4^{[2]}) = 1$.}
\end{figure*}


The $\ell$th column of $M$ is given by:
\begin{equation}\label{eq:col-sum}
  M_{*,\ell} = a_{1, \ell} A_{*, 1} + \ldots + a_{r, \ell} A_{*, r} = \sum_{j
    \in J_\ell} A_{*,j},
\end{equation}
where $J_{\ell} \subseteq \{ 1, 2, \dots, r \}$ and the second equality holds
because $a_{i, \ell} \in \{ 0, 1 \}$.  We call $J_\ell$ the \emph{summing index
  set} of $M_{*, \ell}$. Let $\mm_k$ denote the $2^r \times {r \choose k}$
submatrix of $M$, which includes all columns of the form: $M_{*, \ell} = \sum_{j
  \in J_{\ell}} A_{*, j}$ where $J_{\ell} \subseteq \{ 1, 2, \ldots, r \}$ and
$| J_\ell | = k$. Then the columns of $M$ can be partitioned into:
\begin{equation}
  M = \begin{bmatrix} \mm_0 & \mm_1 & \mm_2 & \ldots & \mm_{2^i} & \mm_{2^i + 1} &
    \ldots & \mm_{r} \end{bmatrix}.
\end{equation}
The next lemma shows that
\begin{equation}
  M^{[i]} = \begin{bmatrix} \mathbf{0} & \mathbf{0} & \ldots & \mathbf{0} &
    \mm_{2^i}^{[i]} & \mm_{2^i + 1}^{[i]} & \ldots &
    \mm_{r}^{[i]} \end{bmatrix}.
\end{equation}

\begin{lemma}\label{lem:at-least-2toi}
  If $k < 2^i$, then $\mm_k^{[i]} = \mathbf{0}$ for all $i \ge 1$.
\end{lemma}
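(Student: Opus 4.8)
The plan is to prove this by a direct bound on the size of the entries of $\mm_k$, forcing the $p$-adic digit in question to vanish entrywise; no structural information about column spaces is needed.

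First I would read off from the column-sum identity~\eqref{eq:col-sum} that every entry of $\mm_k$ is a small non-negative integer. A generic column of $\mm_k$ has the form $M_{*,\ell} = \sum_{j \in J_\ell} A_{*,j}$ with $|J_\ell| = k$, so its entry in row $t$ equals $\sum_{j \in J_\ell} a_{t,j}$; since $A$ has entries in $\{0,1\}$, this is a sum of $k$ terms from $\{0,1\}$ and hence an integer in the interval $[0,k]$. (Renaming the row index to $t$ here keeps it separate from the digit index $i$.)

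Next I would invoke the definition of the $i$th $2$-adic digit: writing an integer $n \ge 0$ as $n = n^{[0]} + 2\,n^{[1]} + 2^2 n^{[2]} + \cdots$ with each $n^{[j]} \in \{0,1\}$, the digit $n^{[i]}$ is the coefficient of $2^i$, so $n^{[i]} = 0$ whenever $0 \le n < 2^i$. Combining this with the entry bound from the previous paragraph and the hypothesis $k < 2^i$, every entry of $\mm_k$ lies in $[0, 2^i - 1]$, so every entry of $\mm_k^{[i]}$ is $0$; that is, $\mm_k^{[i]} = \mathbf{0}$.

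I do not expect a genuine obstacle here: the content is essentially the remark that entries of $\mm_k$ never exceed $k$, together with the elementary fact that an integer below $p^i$ has vanishing $p^i$-coefficient. The only care needed is bookkeeping — keeping the digit index distinct from the matrix-row index, and noting that the statement holds verbatim for $i \ge 1$ (and trivially for $i=0$). The same one-line argument also justifies the block description of $M^{[i]}$ displayed just before the lemma, since the zero blocks $\mm_0^{[i]}, \dots, \mm_{2^i-1}^{[i]}$ are exactly those with $k < 2^i$.
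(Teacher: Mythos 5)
Your argument is correct and is essentially identical to the paper's proof: both bound every entry of $\mm_k$ by $k$ (a sum of $k$ terms from $\{0,1\}$) and then note that an integer less than $2^i$ has vanishing $i$th binary digit. Your write-up merely spells out the details the paper leaves as ``appealing to the binary expansion of $k$.''
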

\begin{proof}
  Columns of $\mm_k$ are given by $\sum_{j \in J} A_{*,j}$ where $|J| = k$.  The
  entries of $A$ are either $0$ or $1$. So the largest entry in $\mm_k$ is $1 +
  \ldots + 1 = k$. The result follows by appealing to the binary expansion of
  $k$.
\end{proof}

We expect $\rank_p(\mm_{2^i}^{[i]}) \le {r \choose 2^i}$ since $\mm_{2^i}^{[i]}$
is a matrix of dimension $2^r \times {r \choose 2^i}$. The next lemma shows that
the rank is, in fact, equal to this upper bound.

\begin{lemma}\label{lem:m-2i-rank-equal}
  $\rank_p(\mm_{2^i}^{[i]}) = {r \choose 2^i}$ for all $i \ge 1$.
\end{lemma}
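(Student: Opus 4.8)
The plan is to compute $\mm_{2^i}^{[i]}$ explicitly and then recognize it as a set-inclusion matrix that visibly has full column rank. First I would record the entries of $M$. Each row of $A$ is the incidence vector of some subset $S \subseteq \{1,\dots,r\}$, so the $(\ell)$-entry of the column $A_{*,j}$ is $1$ exactly when $j$ lies in the set $S_\ell$ represented by row $\ell$. By \eqref{eq:col-sum}, the column $M_{*,m}$ with summing index set $J_m$ therefore has $(\ell)$-entry $\sum_{j \in J_m} a_{\ell,j} = |\,S_\ell \cap J_m\,|$. In particular, for a column of $\mm_{2^i}$ we have $|J_m| = 2^i$, so every entry of that column lies in $\{0,1,\dots,2^i\}$ (this is the content underlying Lemma~\ref{lem:at-least-2toi}).

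Next I would pin down the $i$th binary digit. An integer $t$ with $0 \le t \le 2^i$ has $t^{[i]} = 1$ precisely when $t = 2^i$, since $2^i = 1\underbrace{0\cdots 0}_{i}$ in binary is the unique integer in that range with a one in bit position $i$. Applying this entrywise shows that the $(\ell)$-entry of the column of $\mm_{2^i}^{[i]}$ indexed by $J$ equals $[\,|S_\ell \cap J| = 2^i\,] = [\,J \subseteq S_\ell\,]$. Hence, up to the fixed orderings of rows and columns, $\mm_{2^i}^{[i]}$ is exactly the $0/1$ matrix whose rows are indexed by all subsets $S \subseteq \{1,\dots,r\}$, whose columns are indexed by the $2^i$-element subsets $J \subseteq \{1,\dots,r\}$, and whose $(S,J)$ entry is $1$ iff $J \subseteq S$.

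Finally I would exhibit a full-rank square submatrix. Restricting to the rows indexed by subsets $S$ with $|S| = 2^i$, the condition $J \subseteq S$ for a $2^i$-subset $J$ becomes $J = S$, so this row-submatrix is precisely the ${r \choose 2^i} \times {r \choose 2^i}$ identity matrix. Thus $\rank_p(\mm_{2^i}^{[i]}) \ge {r \choose 2^i}$, and since $\mm_{2^i}$ has only ${r \choose 2^i}$ columns, equality holds. (When $2^i > r$ both sides are $0$ and there is nothing to prove, and the case $i \ge 1$ is where Lemma~\ref{lem:at-least-2toi} guarantees the lower-indexed blocks vanish so that this block carries all of $\rank_p(M^{[i]})$.)

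I do not expect a genuine obstacle here: the only delicate point is the digit computation, and it is exactly the place where $p = 2$ is essential — the combination ``all entries of $\mm_{2^i}$ lie in $[0, 2^i]$'' together with ``$2^i$ is the only such integer with bit $i$ set'' is what collapses the $i$th digit to the clean inclusion indicator. For odd $p$ no analogous collapse occurs, which is why only a conjecture is offered in that case.
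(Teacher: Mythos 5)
Your proof is correct and takes essentially the same route as the paper: both arguments restrict to the rows of $A$ with exactly $2^i$ ones, observe that the entries of $\mm_{2^i}$ are at most $2^i$ with equality exactly when the row support contains (here, equals) the column's summing set, and extract bit $i$ to obtain an ${r \choose 2^i} \times {r \choose 2^i}$ identity submatrix. Your identification of the full block $\mm_{2^i}^{[i]}$ as the subset-inclusion indicator matrix is a clean global reformulation, but the decisive step is the same.
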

\begin{proof}
  Let $c_1, \ldots, c_{r \choose 2^i}$ be the column indices of $\mm_{2^i}$ in
  $M$.  Let $S(\mm_{2^i})$ be the ${r \choose 2^i} \times {r \choose 2^i}$
  submatrix of $\mm_{2^i}$ formed by the rows $c_1, \ldots, c_{r \choose 2^i}$,
  and $S(A)$ be the ${r \choose 2^i} \times r$ submatrix of $A$ formed by the
  rows $c_1, \ldots, c_{r \choose 2^i}$. Rows of $S(A)$ have exactly $2^i$
  non-zero entries because of the construction of $A$. If we treat $A$ and $M$
  as block matrices then $S(\mm_{2^i}) = S(A) S(A)^T$ is the $2^i$th diagonal
  block of $M$ (See Figure \ref{fig:binarycode-example}).

  The entries in row $\rho$ of $S(\mm_{2^i})$ are given by linear combinations
  of the entries in row $\rho$ of $S(A)$. The summing index sets $J_j$,
  where $|J_j| = 2^i$,
  are exactly the locations of the non-zero entries of rows of $S(A)$, which are
  all \emph{different} by construction. Hence there is only one entry in row
  $\rho$ of $S(\mm_{2^i})$ whose summing set matches the locations of the
  non-zero entries in row $\rho$ of $S(A)$. The value of this entry is $1 + 1 +
  \ldots + 1 = 2^i$. The other entries have values less than $2^i$.  Now appeal
  to the binary expansion of $2^i$ to get that $S(\mm_{2^i}^{[i]})$ is an
  identity (sub)matrix\footnote{This is true in the example of Figure
    \ref{fig:binarycode-example} without any reordering, because we constructed
    the row blocks of $A$ such that the binary expansion of $i$ comes after the
    binary expansion of $j$ whenever $i > j$. Without such ordering, the
    identity block assertion holds up to row and column permutations.} of
  $\mm_{2^i}^{[i]}$ whose size is ${r \choose 2^i} \times {r \choose
    2^i}$. Therefore, $\mm_{2^i}^{[i]}$ has rank ${r \choose 2^i}$.
\end{proof}

Next we will prove that $\rank_p(M^{[i]}) = {r \choose 2^i}$ by showing that all
the columns of $ \mm_{2^i + 1}^{[i]}, \mm_{2^i + 2}^{[i]}, \ldots,
\mm_{2^r}^{[i]} $ are linearly \emph{dependent} on those of $\mm_{2^i}^{[i]}$.

\begin{lemma}
  Consider any column $m$ in $\mm_{2^i + z}$, where $z \ge 1$. Then $m^{[i]}$ is
  a linear combination of columns of $\mm_{2^i}^{[i]}$.
\end{lemma}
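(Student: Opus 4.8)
The plan is to understand the $i$th $2$-adic digit of a column sum $m = \sum_{j \in J} A_{*,j}$ with $|J| = 2^i + z$ in terms of the digits of the $\binom{|J|}{2^i}$ sub-sums indexed by size-$2^i$ subsets of $J$. First I would fix a row $\rho$ and look at the single integer entry $m_\rho = \sum_{j \in J} a_{\rho,j}$, which counts how many of the coordinates in $J$ are ``switched on'' in row $\rho$; call this number $t = t(\rho,J) \le 2^i + z$. For any size-$2^i$ subset $J' \subseteq J$, the corresponding entry of $\mm_{2^i}$ in row $\rho$ is $\min$-type count $|J' \cap \text{(on-set in row }\rho)|$, and summing that entry over all $\binom{|J|}{2^i}$ subsets $J'$ of size $2^i$ gives, by a double-counting argument, a fixed multiple of $t$, namely $\binom{|J|-1}{2^i-1}\, t$. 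So at the level of raw integer vectors we have the clean identity $\binom{|J|-1}{2^i-1}\, m = \sum_{J' \subseteq J,\ |J'|=2^i} (\text{column of } \mm_{2^i} \text{ indexed by } J')$. The difficulty is that this is an identity of \emph{integer} vectors, not of $i$th digits, and the digit map $x \mapsto x^{[i]}$ is not additive.

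To pass from the integer identity to a statement about $i$th digits, I would work $2$-adically and carry out the bookkeeping one entry at a time. Write each raw entry $m_\rho = t$ and each sub-sum entry as an integer in $[0, 2^i+z]$; the $i$th binary digit of $t$ is determined by $t$ modulo $2^{i+1}$. The key observation is that $t \le 2^i + z$, so $t$ has a very short binary expansion, and its $i$th digit is either $0$ (when $t < 2^i$) or governed by the top of a short range (when $t \ge 2^i$). I would then argue that, modulo $2^{i+1}$, the vector $m^{[i]}$ (entrywise $i$th digit) is itself an $\mathbb{F}_2$-linear combination of the vectors $(\mm_{2^i})^{[i]}_{*,J'}$ over size-$2^i$ subsets $J' \subseteq J$: since the raw entries of $\mm_{2^i}$ lie in $[0,2^i]$ by Lemma~\ref{lem:at-least-2toi}'s argument, each such entry equals $2^i$ exactly when the defining subset $J'$ is contained in the on-set of row $\rho$, i.e. exactly when $t \ge 2^i$ and $J'$ is one of the $\binom{t}{2^i}$ subsets of that on-set — so $(\mm_{2^i})^{[i]}_{\rho, J'} = 1$ iff $J' \subseteq \text{on-set}$, and the number of such $J'$ is $\binom{t}{2^i}$, whose parity matches the $i$th digit of $t$ by Kummer's/Lucas' theorem (the $i$th digit of $t$ is $1$ iff $\binom{t}{2^i}$ is odd). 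This is exactly the bridge: $m^{[i]}_\rho \equiv \sum_{J'} (\mm_{2^i})^{[i]}_{\rho,J'} \pmod 2$, uniformly in $\rho$, so $m^{[i]} = \sum_{J'} (\mm_{2^i})^{[i]}_{*,J'}$ over $\mathbb{F}_2$.

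The main obstacle is precisely the combinatorial identity ``$i$th digit of $t$ equals parity of $\binom{t}{2^i}$,'' together with confirming that no lower-order carries from the $2$-adic arithmetic leak into digit $i$: one must check that the entries of $\mm_{2^i}$ really are bounded by $2^i$ (so their $i$th digit is an honest indicator of ``$=2^i$'') and that the subsets $J'$ achieving that maximum in a given row are exactly the $\binom{t}{2^i}$ subsets of the on-set. Once that is pinned down the rest is formal. I would structure the write-up as: (1) fix a row, define $t$; (2) prove $(\mm_{2^i})^{[i]}_{\rho,J'} = [\,J' \subseteq \text{on-set of row }\rho\,]$; (3) invoke Lucas to get $\sum_{J'} [\,J' \subseteq \text{on-set}\,] = \binom{t}{2^i} \equiv t^{[i]} \pmod 2$; (4) conclude $m^{[i]} = \sum_{J' \subseteq J,\,|J'|=2^i}(\mm_{2^i})^{[i]}_{*,J'}$ over $\mathbb{F}_2$, which is the desired linear dependence. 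Combined with Lemma~\ref{lem:m-2i-rank-equal} and Lemma~\ref{lem:at-least-2toi}, this yields $\rank_p(M^{[i]}) = \binom{r}{2^i}$.
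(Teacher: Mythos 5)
Your proof is correct and follows essentially the same route as the paper: you compare the two sides row by row, observe that the $i$th digit of an entry of $\mm_{2^i}$ is an indicator of the size-$2^i$ subset being contained in the row's on-set, count $\binom{t}{2^i}$ such subsets, and bridge to the $i$th binary digit of $t$ via Kummer/Lucas --- which is exactly the paper's Lemma~\ref{lem:kummer-equiv}. The opening integer-level identity is an unnecessary detour (and you rightly discard it), but the substantive argument matches the paper's.
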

\begin{proof}
  Let $J$ be the summing index set of $m$, where $ | J | = 2^i + z$. Let
  $\mathcal{I}$ be the set of all subsets of $J$ of size $2^i$, so $|
  \mathcal{I} | = {2^i + z \choose 2^i}$. For every $I \in \mathcal{I}$, there
  is a unique corresponding column $c_I$ in $\mm_{2^i}$ whose summing set is
  $I$. We will show that $m^{[i]}$ can be obtained by adding up $c_I$'s. In
  other words,
  \begin{equation}\label{eq:m-is-linear-sum}
    m^{[i]} \equiv \sum_{I \in \mathcal{I}} c_I^{[i]} \pmod{2}. 
  \end{equation}
  Let $A_J$ denote the submatrix of $A$ formed by the columns indexed by
  $J$. For any row $\rho$ of $A_J$, let $2^i + k_\rho$ be the number of $1$'s in
  that row, where $-2^i \le k_\rho \le z$. First, if $k_\rho < 0$, then the
  corresponding sum of $1$'s at this row is less than $2^i$. By
  Lemma~\ref{lem:at-least-2toi}, we have the corresponding entries in both
  $\mm_{2^i}^{[i]}$ and $\mm_{2^i + z}^{[i]}$ are zeros and
  \eqref{eq:m-is-linear-sum} trivially holds.  On the other hand, if $0 \le
  k_\rho \le z$, then the $\rho$th entry of the right-hand side of
  \eqref{eq:m-is-linear-sum} is $1 + 1 + \ldots + 1 \equiv {2^i + k_\rho \choose
    2^i} \pmod{2}$ since $| \mathcal{I} | = {2^i + k_\rho \choose 2^i}$. (Recall
  that the number of non-zero entries in row $\rho$ is $2^i + k_\rho$ rather
  than $2^i + z$.) The $\rho$th entry of the left-hand side of \eqref{eq:m-is-linear-sum} is
  $(2^i + k_\rho) \quo 2^i$. The $(2^i + k_\rho)$ term corresponds to adding
  $(2^i + k_\rho)$ non-zero entries, and the $\quo 2^i$ operation corresponds to
  the $i$th bit of the binary expansion of $m$. By Lemma~\ref{lem:kummer-equiv}
  (below), we have $(2^i + k_\rho) \quo 2^i \equiv {2^i + k_\rho \choose 2^i}
  \pmod{2}$, and \eqref{eq:m-is-linear-sum} holds.
\end{proof}

The proof of the next (auxiliary) lemma uses a theorem due to
Kummer~\cite{Kummer:1851}.

\begin{fact}[Kummer's Theorem] The exact power of $p$
  dividing $a+b \choose a$ is equal to the number of carries when performing the
  addition of $(a+b)$ written in base $p$.
\end{fact}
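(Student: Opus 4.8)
The plan is to reduce Kummer's theorem to Legendre's classical formula for the $p$-adic valuation of a factorial, and then to read the number of carries directly off the base-$p$ addition algorithm. Write $v_p(m)$ for the exact exponent of $p$ in a positive integer $m$, and for $n \ge 0$ let $s_p(n)$ denote the sum of the digits of $n$ in base $p$. The first step is Legendre's identity
\[ v_p(n!) = \sum_{i \ge 1} \floor{n/p^i} = \frac{n - s_p(n)}{p-1}, \]
which I would prove by noting that, for each $i \ge 1$, exactly $\floor{n/p^i}$ of the integers $1, \dots, n$ are divisible by $p^i$, so that each $k \le n$ is counted exactly $v_p(k)$ times in $\sum_{i\ge1}\floor{n/p^i}$; the closed form on the right then follows by writing $n = \sum_{j\ge0} n_j p^j$ in base $p$, substituting into $\sum_i \floor{n/p^i}$, and summing the resulting finite series $\sum_{j\ge1} n_j\bigl(p^{j-1} + \cdots + 1\bigr)$.

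Next, applying this identity to $\binom{a+b}{a} = (a+b)!/(a!\,b!)$ and using additivity of $v_p$ over products and quotients gives
\[ v_p\!\binom{a+b}{a} = \frac{\bigl(s_p(a) + s_p(b)\bigr) - s_p(a+b)}{p-1}, \]
so it remains only to show that the right-hand side equals the number of carries in the base-$p$ addition of $a$ and $b$.

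For this I would run the schoolbook addition algorithm. Write $a = \sum_{i\ge0} a_i p^i$ and $b = \sum_{i\ge0} b_i p^i$, set $c_0 = 0$, and define the sum digit $d_i$ and the outgoing carry $c_{i+1}$ by the relation
\[ a_i + b_i + c_i = d_i + p\,c_{i+1}, \qquad 0 \le d_i \le p-1, \quad c_{i+1} \in \{0,1\}, \]
so that $a+b = \sum_i d_i p^i$ and a carry occurs out of position $i$ precisely when $c_{i+1} = 1$; all the sequences involved are finitely supported. Summing this relation over all $i \ge 0$ and setting $C = \sum_{i\ge1} c_i = \sum_{i\ge0} c_i$ (using $c_0 = 0$) for the total number of carries, the carry terms on the two sides differ by a factor of $p$ while everything else collapses into digit sums, which yields $s_p(a) + s_p(b) + C = s_p(a+b) + pC$, i.e. $s_p(a) + s_p(b) - s_p(a+b) = (p-1)C$. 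Combined with the previous display this gives $v_p\binom{a+b}{a} = C$, the number of carries, as claimed.

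The computation is short, and no deep idea is needed beyond Legendre's formula, which does the real work; Kummer's theorem is essentially a restatement of it in terms of the addition algorithm. The only place that calls for care is the bookkeeping in the final step: handling the index shift between the incoming carries $c_i$ and the outgoing carries $c_{i+1}$ so that the single quantity $C$ emerges with coefficient $p-1$, and confirming that every series in sight is a finite sum so that the rearrangements are valid.
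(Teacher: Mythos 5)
Your proof is correct. Note, though, that the paper does not prove this statement at all: it is labelled a \emph{Fact} and cited to Kummer's 1851 paper, so there is no in-paper argument to compare against. Your derivation is the standard one — Legendre's identity $v_p(n!) = (n - s_p(n))/(p-1)$ applied to $(a+b)!/(a!\,b!)$, followed by the telescoping of the carry recurrence $a_i + b_i + c_i = d_i + p\,c_{i+1}$ to get $s_p(a) + s_p(b) - s_p(a+b) = (p-1)C$ — and every step checks out, including the implicit bound $c_{i+1} \le 1$ (since $a_i + b_i + c_i \le 2p-1$) and the finiteness of all the sums. This is a complete and self-contained proof of the cited fact.
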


A corollary of Kummer's theorem is that ${a+b \choose a}$ is odd (resp. even) if
adding $(a+b)$ written in binary expansion generates no (resp. some) carries.

\begin{lemma}\label{lem:kummer-equiv}
  $ (2^i + k) \quo 2^i \equiv {2^i + k \choose 2^i} \pmod 2 $.
\end{lemma}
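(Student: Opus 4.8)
The plan is to reduce both sides of the congruence to a statement about a single binary digit of $k$ — its $i$th bit, which I write $k_i \in \{0,1\}$ — and check that they agree. I assume $k \ge 0$, which is all that the application needs (the case $k < 0$ there is disposed of separately via Lemma~\ref{lem:at-least-2toi}).

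First I would evaluate the left-hand side. By the definition of Euclidean division, $(2^i + k) \quo 2^i = 1 + \floor{k/2^i}$. Writing $k = \sum_{j \ge 0} k_j 2^j$ in binary gives $\floor{k/2^i} = k_i + 2k_{i+1} + 4k_{i+2} + \cdots$, so $\floor{k/2^i} \equiv k_i \pmod 2$ and hence $(2^i + k) \quo 2^i \equiv 1 + k_i \pmod 2$. In particular the left-hand side is odd exactly when $k_i = 0$.

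Next I would evaluate the right-hand side using Kummer's Theorem (stated above as a Fact), applied with $a = 2^i$ and $b = k$: $\binom{2^i + k}{2^i}$ is odd iff adding $2^i$ and $k$ in base $2$ produces no carries. Since $2^i$ has exactly one nonzero bit, in position $i$, no carry is generated in positions $0, \dots, i-1$, whereas at position $i$ the column sum is $1 + k_i$, which carries iff $k_i = 1$. Therefore the total number of carries is zero iff $k_i = 0$, i.e. $\binom{2^i + k}{2^i}$ is odd exactly when $k_i = 0$. Comparing the two computations, both sides are odd precisely when $k_i = 0$ and even precisely when $k_i = 1$, which is the claimed congruence.

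I do not expect a genuine obstacle: the proof is just these two elementary observations. The only points worth stating with care are the bit-extraction identity $\floor{k/2^i} \equiv k_i \pmod 2$ and the remark that, in the addition $2^i + k$, the presence of a carry is controlled by $k_i$ alone — both immediate from the base-$2$ expansion of $k$. (One could instead bypass Kummer and compute $\binom{2^i+k}{2^i} \bmod 2$ directly from Lucas' Theorem, reaching the same conclusion, but appealing to the already-quoted Kummer Fact keeps this section self-contained.)
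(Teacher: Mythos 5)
Your proof is correct and is essentially the same as the paper's: the paper writes $k = Q2^i + R$ and argues via the parity of $Q$, which is exactly your bit $k_i$, and it invokes Kummer's theorem in the identical way to detect the single potential carry at position $i$. No substantive difference.
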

\begin{proof}
  We will show that $(2^i + k) \quo 2^i$ and $2^i + k \choose 2^i$ have the same
  parity.
  Write $k = Q 2^i + R$ for a quotient $Q \ge 0$ and a remainder $0 \le R <
  2^i$. There are two cases for $Q$.
  If $Q$ is even, then the $i$th bit\footnote{i.e. the coefficient of $2^i$ in
    the binary expansion of $k$.} of $k$ is $0$ and hence no carries are
  generated when adding $k$ and $2^i$ in base $2$. So by Kummer's Theorem, $2^i
  + k \choose 2^i$ is odd and ${2^i + k \choose 2^i} \equiv 1 \pmod{2}$.
  If $Q$ is odd, then the $i$th bit of $k$ is $1$ and the number of carries
  generated when adding $2^i + k$ in base $2$ is at least $1$. So by Kummer's
  theorem $2^i + k \choose 2^i$ is even and ${2^i + k \choose 2^i} \equiv 0
  \pmod{2}$.

  We have shown that ${2^i + k \choose 2^i}$ and $Q$ have opposite
  parities. Now, substitute $k = Q 2^i + R$ to get $(2^i + k) \quo 2^i =
  Q+1$. Hence, modulo $2$, $(2^i + k) \quo 2^i$ also have an opposite parity to
  that of $Q$. This concludes our proof.
\end{proof}

\subsection{Non-symmetric Matrices}

So far we have shown that $\rank_p(M^{[i]}) = \rank_p(\mm_{2^i}^{[i]}) = {r \choose
  2^i}$, where $M = A A^{T}$ for some specially constructed $A$. We now put the
results together into a more general theorem.

\begin{theorem}\label{thm:padic-general}
  Assume $U, S, V \in \ZZ^{\nxn}$, such that $U, V$ have entries from $\{0,
  1\}$, $\det U \det V \not\equiv 0 \pmod{2}$, $S = \diag(1, \ldots, 1, 0,
  \ldots, 0)$, $\rank_p(S) = r$, and $n \ge 2^r$. If $M = USV \in \ZZ^{n\times
    n}$, then $\rank_p(M^{[i]}) = {r \choose 2^i}$ for all $i \ge 1$.
\end{theorem}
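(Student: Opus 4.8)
The plan is to reduce the statement, through the column structure of $M=USV$, to the symmetric binary-code computation already carried out in Section~\ref{sec:padic}: its two structural lemmas give the upper bound almost for free, and a short argument supplies the matching lower bound. First I would record the shape of $M$. Since $S=\diag(1,\ldots,1,0,\ldots,0)$ has exactly $r$ ones, $M=U'V'$ where $U'\in\{0,1\}^{n\times r}$ is the block of the first $r$ columns of $U$ and $V'\in\{0,1\}^{r\times n}$ the block of the first $r$ rows of $V$. As in the rest of Section~\ref{sec:padic}, each column is a subset sum $M_{*,\ell}=\sum_{k\in J_\ell}U'_{*,k}$ with summing index set $J_\ell=\{k\le r: v_{k,\ell}=1\}$, and dually $M_{\rho,*}=\sum_{k\in I_\rho}V'_{k,*}$ with $I_\rho=\{k\le r: u_{\rho,k}=1\}$, so that $M_{\rho,\ell}=|I_\rho\cap J_\ell|$. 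For $I\subseteq\{1,\ldots,r\}$ set $c_I=\sum_{k\in I}U'_{*,k}\in\ZZ^n$, so the columns of $M^{[i]}$ are exactly the vectors $c_{J_\ell}^{[i]}$.

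For the upper bound I would note that Lemma~\ref{lem:at-least-2toi} and the linear-dependence lemma preceding Kummer's Theorem use nothing about the binary-code matrix beyond $\{0,1\}$ entries, the combinatorics of summing sets, and Lemma~\ref{lem:kummer-equiv}; applied with $U'$ in place of $A$ they yield $c_J^{[i]}=\mathbf 0$ when $|J|<2^i$ and $c_J^{[i]}\equiv\sum_{I\subseteq J,\,|I|=2^i}c_I^{[i]}\pmod 2$ when $|J|>2^i$. Hence every column of $M^{[i]}$ lies in $\operatorname{span}_{\ZZ/2\ZZ}\{c_I^{[i]}: |I|=2^i\}$, a space of dimension at most $\binom{r}{2^i}$, so $\rank_p(M^{[i]})\le\binom{r}{2^i}$ (and in particular $M^{[i]}=\mathbf 0$ when $2^i>r$, matching $\binom{r}{2^i}=0$).

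For the lower bound I would exhibit a $\binom{r}{2^i}\times\binom{r}{2^i}$ identity submatrix of $M^{[i]}$: pick rows $\rho_I$ and columns $\ell_J$ whose summing sets $I_{\rho_I}$ and $J_{\ell_J}$ range over all size-$2^i$ subsets $I,J\subseteq\{1,\ldots,r\}$; then $M_{\rho_I,\ell_J}=|I\cap J|$ equals $2^i$ precisely when $I=J$ and is strictly smaller otherwise, so its $i$th binary digit is $1$ iff $I=J$. The crux — and the step I expect to be the main obstacle — is to show such rows and columns exist, i.e. that $n\ge 2^r$ together with $\det U\det V\not\equiv 0\pmod 2$ forces every size-$2^i$ pattern (plausibly every pattern in $\{0,1\}^r$) to occur among the rows of $U'$ and among the columns of $V'$. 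I would attack this by arguing that, up to row and column permutations and repeated or zero rows and columns — which change neither rank nor $p$-adic digits — $U'$ and $V'$ reduce to the canonical binary-code matrix, so that the situation becomes the already-settled case $M=AA^T$ handled by Lemma~\ref{lem:m-2i-rank-equal}. Combining the two bounds then gives $\rank_p(M^{[i]})=\binom{r}{2^i}$.
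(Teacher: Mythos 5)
Your overall strategy is the same as the paper's: factor $M = USV = (US)(SV) = LR$ with $L \in \{0,1\}^{n\times r}$ and $R \in \{0,1\}^{r\times n}$ (your $U'$, $V'$), and reduce to the already-analyzed binary-code product $AA^{T}$. Your upper-bound half is sound, and in fact more carefully justified than the paper's closing remark that ``the rest of the results are straightforward to verify'': Lemma~\ref{lem:at-least-2toi}, the unnumbered linear-dependence lemma, and Lemma~\ref{lem:kummer-equiv} really do use only the $\{0,1\}$ entries and the row-weight combinatorics, so every column of $M^{[i]}$ lies in the span of the $\binom{r}{2^i}$ vectors $c_I^{[i]}$ with $|I| = 2^i$, whether or not those vectors occur as columns of $M$.

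The step you flag as the main obstacle is a genuine gap, and it cannot be closed from the stated hypotheses. Your lower bound needs every weight-$2^i$ pattern to occur among the rows of $L = US$ and the columns of $R = SV$; equivalently, that $L$ is obtained from the binary-code matrix $A$ by appending duplicate rows and permuting. But nonsingularity of $U$ modulo $2$ only forces the $r$ columns of $L$ to be linearly independent mod $2$, not that all $2^r$ row patterns appear. Take $U = V = I_n$ with $n \ge 2^r$ and $r \ge 2$: then $M = S$, so $M^{[1]} = \mathbf{0}$, while the theorem asserts $\rank_p(M^{[1]}) = \binom{r}{2} \ge 1$. So your proposed reduction of $U'$ and $V'$ ``to the canonical binary-code matrix'' fails for such $U, V$, and no argument can rescue it without strengthening the hypotheses. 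You should know that this is exactly the step the paper's own proof glosses over when it asserts that $\widehat{L} = A$ can be transformed into $L$ by adjoining duplicate rows and permuting; the theorem really requires the additional assumption that all binary $r$-tuples occur among the rows of $US$ and the columns of $SV$ (a genericity condition), under which your identity-submatrix computation $M_{\rho_I, \ell_J} = |I \cap J|$ does complete the lower bound exactly as in Lemma~\ref{lem:m-2i-rank-equal}.
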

\begin{proof}
  Since $S = SS$, we have $M = USV = USSV = LR$, where $L = US \in \ZZ^{n\times
    r}$, and $R = SV \in \ZZ^{r\times n}$. Let $A \in \ZZ^{2^r \times r}$ be the
  binary code matrix of the digits $\{ 0, \ldots, 2^r - 1\}$. Consider the
  matrices $\widehat{L} = A$, $\widehat{R} = A^T$ and $\widehat{M} = \widehat{L}
  \widehat{R}$. If we start with $\widehat{L}$ (resp. $\widehat{R}$) and augment
  it with the appropriate $(n - 2^r)$ additional rows (resp. columns), and apply
  the appropriate row and column permutations, then we could transform
  $\widehat{L}$ into $L$ (resp. $\widehat{R}$ into $R$), and in effect,
  transform $\widehat{M}$ into $M$. Our goal is to show that the rank arguments
  of the previous lemmas hold under the aforementioned operations.

  We first note that row and column permutations preserve ranks. Also, by a
  simple enumeration argument over the binary tuples of size $r$, and by the
  given fact that $n \ge 2^r$, we can conclude that any additional rows
  (resp. columns) augmented to $\widehat{L}$ (resp. $\widehat{R}$) will be
  linearly dependent. In fact, any such rows (resp. columns) will be duplicates
  of existing rows (resp. columns).

  Now, consider adding extra columns to $\widehat{R}$. The resulting extra
  columns in $\widehat{M}$
  are duplicates of existing columns and hence the ranks in
  Lemma~\ref{lem:m-2i-rank-equal} are not affected. Finally, adding extra rows
  to $\widehat{L}$ does not change the cardinality of the summing index sets in
  \eqref{eq:col-sum}.  The rest of the results are straightforward to verify.
\end{proof}

\subsection{Odd Primes}\label{sec:conjecture}

For $p = 2$, the non-zero patterns of the binary code matrix $A$ coincides with
the summing indices in \eqref{eq:col-sum}. This is not true for odd primes,
where the linear combinations can have coefficients other than $0$ and $1$. Thus
it is an open question to devise construction a similar to binary code matrices,
which exposes the combinatorial structure of the column space of $M = A
A^T$. However, we present the following conjecture towards understanding the
$p$-adic ranks for odd primes.

\begin{conjecture}\label{RankCarryConjecture}
  Assume $p = 2k + 1$ is an odd prime, $U,S,V \in \ZZ^{n \times n}$ such that
  $U,V$ have entries from $[0, p-1]$, $\det U \det V \not\equiv 0 \pmod{p}$, $S$
  is a $0, 1$ diagonal matrix and $\rank_p(S) = r$.  Let $M = USV = M^{[0]} +
  M^{[1]} p + \cdots$ where $M^{[i]} \in (\ZZ/p\ZZ)^{n \times n}$. It is
  conjectured that
  \begin{equation}
    \rank_p(M^{[1]}) \leq \sum_{i=0}^{k}{ \binom{r+2i}{2i+1} } + \binom{r+2k - 1}{2k}
    -2r
  \end{equation}
  Furthermore, in the generic case where the entries of $U,V$ are uniformly
  chosen at random from $[0, p-1]$, and $n$ is arbitrarily large, the ranks are
  \emph{equal} to the stated bound.
\end{conjecture}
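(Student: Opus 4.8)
The plan is to attack the conjecture by mirroring the $p=2$ argument of Theorem~\ref{thm:padic-general}: reduce $M$ to a product of ``pattern'' matrices, express $M^{[1]}$ entrywise modulo $p$ by way of Kummer's theorem, and thereby factor $M^{[1]}$ through a single explicit matrix whose rank can be computed combinatorially. Exactly as in the proof of Theorem~\ref{thm:padic-general}, from $S=SS$ we get $M=USV=LR$ with $L=US\in\ZZ^{n\times r}$ and $R=SV\in\ZZ^{r\times n}$ (working in the $r$ coordinates on which $S$ equals $1$). The $(\rho,\ell)$ entry of $M$ is then the \emph{integer} inner product $\langle a,b\rangle:=\sum_{j=1}^{r}a_jb_j$, where $a=(a_j)$ is row $\rho$ of $L$ and $b=(b_j)$ is column $\ell$ of $R$, so $a,b\in[0,p-1]^{r}$. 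Since $M^{[1]}_{\rho,\ell}$ is the coefficient of $p$ in the base-$p$ expansion of the nonnegative integer $\langle a,b\rangle$, and since $\binom{x}{p}\equiv\lfloor x/p\rfloor\pmod p$ for every $x\ge 0$ (a standard congruence, analogous to Lemma~\ref{lem:kummer-equiv}), Vandermonde's convolution gives
\begin{equation}\label{eq:m1-vandermonde}
  M^{[1]}_{\rho,\ell}\ \equiv\ \binom{\langle a,b\rangle}{p}\ \equiv\ \sum_{\substack{e\in\ZZ_{\ge 0}^{r}\\ e_1+\cdots+e_r=p}}\ \prod_{j=1}^{r}\binom{a_jb_j}{e_j}\pmod p.
\end{equation}

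Next I would linearise \eqref{eq:m1-vandermonde}. For each value $e_j$, the function $(a_j,b_j)\mapsto\binom{a_jb_j}{e_j}\bmod p$ on $\mathbb{F}_p^{2}$ has a unique representation $\sum_{0\le s_j,t_j\le p-1}\delta^{(e_j)}_{s_j,t_j}\,a_j^{s_j}b_j^{t_j}$ as a polynomial of degree $<p$ in each variable; it is symmetric in $a_j,b_j$, and for $e_j\ge 1$ it vanishes at $a_j=0$ and at $b_j=0$ and has degree $\le e_j$ in each variable, so $\delta^{(e_j)}_{s_j,t_j}\ne 0$ forces $1\le s_j,t_j\le e_j$ (while for $e_j=0$ it is the constant $1$). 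Substituting into \eqref{eq:m1-vandermonde} and collecting monomials yields, over $\mathbb{F}_p$,
\begin{equation}\label{eq:m1-factored}
  M^{[1]}\ \equiv\ P\,\Delta\,Q^{T},\qquad \Delta_{s,t}\ =\ \sum_{\substack{e\in\ZZ_{\ge 0}^{r}\\ e_1+\cdots+e_r=p}}\ \prod_{j=1}^{r}\delta^{(e_j)}_{s_j,t_j},
\end{equation}
where $P$ (resp.\ $Q$) is the $n\times p^{r}$ matrix whose column indexed by $s\in\{0,\dots,p-1\}^{r}$ evaluates the monomial $\prod_j x_j^{s_j}$ at the rows of $L$ (resp.\ the columns of $R$), and $\Delta$ is the explicit \emph{symmetric} $p^{r}\times p^{r}$ matrix shown. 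The $p^{r}$ monomials of degree $<p$ in $r$ variables are linearly independent as functions on $\mathbb{F}_p^{r}$, so \eqref{eq:m1-factored} already gives the universal bound $\rank_p(M^{[1]})\le\rank_p(\Delta)$; and whenever the rows of $L$ and the columns of $R$ each run over all of $\mathbb{F}_p^{r}$ --- which holds in the extremal configuration once $n\ge p^{r}$, and holds almost surely for uniformly random $U,V$ when $n$ is arbitrarily large, by a coupon-collector argument --- the matrices $P,Q$ have full column rank and $\rank_p(M^{[1]})=\rank_p(\Delta)$. Hence the entire conjecture reduces to the purely combinatorial identity
\begin{equation}\label{eq:rank-delta-target}
  \rank_p(\Delta)\ =\ \sum_{i=0}^{k}\binom{r+2i}{2i+1}+\binom{r+2k-1}{2k}-2r .
\end{equation}

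To prove \eqref{eq:rank-delta-target} I would determine $\ker(\Delta)$ explicitly. The support constraints on the $\delta^{(e_j)}_{s_j,t_j}$ make $\Delta$ block-triangular for the grading by total degree $|s|=\sum_j s_j$, and within each block its structure is dictated, via \eqref{eq:m1-vandermonde}, by the compositions of $p$ that dominate a given exponent pair coordinatewise --- the precise $p$-analogue of the role played by sub-index-sets of a summing set in Lemma~\ref{lem:m-2i-rank-equal} and in the lemma immediately after it. Carrying out the resulting row/column reduction should leave a number of pivots that one matches against the right-hand side of \eqref{eq:rank-delta-target} by a $p$-analogue of the binomial-theorem computation that closes the proof of Lemma~\ref{lem:rank-A-rem-p}: note that $\sum_{i=0}^{k}\binom{r+2i}{2i+1}$ enumerates the multisets on $r$ symbols of odd sizes $1,3,\dots,p$, and $\binom{r+2k-1}{2k}$ those of size $p-1$, so I expect the surviving pivots to be indexed by exactly these multisets, with the $r$ compositions placing all of $p$ into a single coordinate, together with one further family of coincidences, forcing the $-2r$ collapse. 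The generic-equality part of the statement then follows immediately from \eqref{eq:rank-delta-target} and the full-column-rank observation of the previous paragraph.

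The main obstacle is exactly this last step. For $p=2$, $M^{[1]}$ factored through a plain $0/1$ inclusion matrix of $2$-subsets, with no correction term; for odd $p$ the coefficients $\delta^{(e)}_{s,t}$ genuinely mix distinct exponents --- already $\binom{ab}{3}\equiv a^{2}b^{2}+2a^{2}b+2ab^{2}+ab$ over $\mathbb{F}_3$ --- so one must exhibit the change of basis that triangularises $\Delta$ and pin down its kernel dimension exactly. This exponent-mixing is precisely what produces both the size-$(p-1)$ multiset term and the $-2r$ correction in \eqref{eq:rank-delta-target}, and it is also why no clean combinatorial model (a $p$-ary analogue of the binary code matrix) has so far been formulated; bringing it under quantitative control is the heart of the problem.
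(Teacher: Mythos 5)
The statement you are addressing is a \emph{conjecture}: the paper offers no proof of it (it records the statement and cites its first appearance in \cite{Elsheikh:2012}), so there is no argument of the authors' to measure yours against. Judged on its own terms, your proposal is a reduction, not a proof, and you say as much in your final paragraph. The reduction itself looks sound: writing $M = LR$ with $L = US$, $R = SV$; using the Lucas/Kummer congruence $\binom{x}{p}\equiv\lfloor x/p\rfloor\pmod p$ together with Vandermonde's convolution to expand the $p$-th binomial coefficient of the integer inner product $\langle a,b\rangle$; and linearising each factor $\binom{a_jb_j}{e_j}$ as a reduced polynomial on $\mathbb{F}_p^2$ does legitimately yield a factorisation $M^{[1]}\equiv P\Delta Q^{T}\pmod p$ with $\Delta$ a fixed $p^{r}\times p^{r}$ matrix independent of $U$, $V$ and $n$. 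The full-column-rank claim for $P$ and $Q$ in the generic large-$n$ case is also correct, since the $p^{r}$ monomials with exponents in $\{0,\dots,p-1\}$ are linearly independent as functions on $\mathbb{F}_p^{r}$. This gives the universal bound $\rank_{p}(M^{[1]})\le\rank_{p}(\Delta)$ and generic equality with $\rank_{p}(\Delta)$.

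The gap is that the entire content of the conjecture now sits in the unproven identity $\rank_{p}(\Delta)=\sum_{i=0}^{k}\binom{r+2i}{2i+1}+\binom{r+2k-1}{2k}-2r$, for which you offer only a heuristic: a hoped-for block triangularisation of $\Delta$ by total degree and a guess about which multisets index the surviving pivots, with no argument pinning down $\ker\Delta$ or explaining the $-2r$ correction. Until that rank is actually computed --- or at least bounded above by the stated quantity --- nothing about the conjectured inequality has been established. What you do have is a genuinely useful reformulation: $\Delta$ is an explicit candidate for the $p$-ary analogue of the binary code matrix that the authors say is missing in Section~\ref{sec:conjecture}, and verifying the conjecture for any fixed $p$ and $r$ is now a finite computation. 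But as written the proposal proves neither the upper bound nor the generic equality asserted in Conjecture~\ref{RankCarryConjecture}.
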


This conjecture first appeared in~\cite{Elsheikh:2012}. It shows that a product
of matrices with ``small'' entries and ``small'' rank can still have very large
rank, but not full, $p$-adic expansion. In other words, the ``carries'' from the
product $USV$ will impact many digits in the expanded product.



\section*{Acknowledgment}
The authors would like to thank Andrew Arnold, Kevin Hare, David McKinnon, 
Jason Peasgood, and B. David Saunders.



\end{document}